\newtheorem{theorem}{Theorem}[section]
\newtheorem{definition}[theorem]{Definition}
\newtheorem{proposition}[theorem]{Proposition}
\newtheorem{remark}[theorem]{Remark}
\newtheorem{example}{Example}
\numberwithin{equation}{section}
\begin{document}

\title{\Large \bf Functional It\^{o} formula for fractional Brownian motion}

\author
{\textbf{Jiaqiang Wen}$^{1}$,
\textbf{Yufeng Shi}$^{1,2,}$\thanks{Corresponding author. Email: jqwen@mail.sdu.edu.cn (J. Wen), yfshi@sdu.edu.cn (Y. Shi)} \vspace{3mm} \\
\normalsize{$^{1}$Institute for Financial Studies and School of Mathematics,}\\
\normalsize{Shandong University, Jinan 250100, China}\\
\normalsize{$^{2}$School of Statistics, Shandong University of Finance and Economics,}\\
\normalsize{Jinan 250014, China}\\
}

\date{}

\renewcommand{\thefootnote}{\fnsymbol{footnote}}

\footnotetext[0]{This work is supported by
    National Natural Science Foundation of China (Grant Nos. 11371226 and 11231005),
    Foundation for Innovative Research Groups of National Natural Science Foundation of China (Grant No. 11221061),
    the 111 Project (Grant No. B12023).}

\maketitle

\begin{abstract}
We develop the functional It\^{o}/path-dependent calculus with respect to fractional Brownian motion with Hurst parameter $H> \frac{1}{2}$.
Firstly, two types of integrals are studied.
The first type is Stratonovich integral, and the second type is Wick-It\^{o}-Skorohod integral.
Then we establish the functional It\^{o} formulas for fractional Brownian motion,
which extend the functional It\^{o} formulas in Dupire (2009) and Cont and Fourni\'{e} (2013) to the case of non-semimartingale.
Finally, as an application, we deal with a class of fractional backward stochastic differential equations (BSDEs).
A relation between fractional BSDEs and path-dependent partial differential equations (PDEs) is established.
\end{abstract}

\textbf{Keywords}:
Functional It\^{o} calculus;
Functional It\^{o} formula;
Fractional Brownian motion;
Fractional BSDE;
Path-dependent PDE

\vspace{3mm}

\textbf{2010 Mathematics Subject Classification}: 60H05; 60H07; 60G22

\section{Introduction}

Recently, a new branch of stochastic calculus has appeared, known as functional It\^{o} calculus,
which is an extension of classical It\^{o} calculus to functionals depending on all pathes of a stochastic process and not only on its current values,
see Dupire \cite{Dup} for the initial point of view and Cont and Fourni\'{e} \cite{Cont10,Con} for further developed results.
A new type of functional It\^{o} formulas for semimartingale was also established in \cite{Dup,Con}.
Since then, both the theory and application of the functional It\^{o} calculus have been paid very strong attention.
We refer to  Buckdahn, Ma and Zhang \cite{Buckdahn}, Cosso and Russo \cite{Russo4}, Ekren et al. \cite{Iekr},
Ekren, Touzi and Zhang \cite{Iekr2,Iekr3}, Keller and Zhang \cite{Keller}, Peng and Wang \cite{Spen}, and  Tang and Zhang \cite{Tang} etc.,
for recent developments on functional It\^{o} calculus for semimartingale.

As we know that It\^{o} formula is an important ingredient and a powerful tool in It\^{o} calculus,
it is a natural and curious question if there is a functional It\^{o} formula for non-semimartingale.
It is well-known that fractional Brownian motion (fBm, for short) is not a semimartingale and plays an increasingly important role in many fields such as hydrology, economics and telecommunications.
Hence, it is a significant and challenging problem to develop the functional It\^{o} calculus with respect to fractional Brownian motion,
especially to establish the functional It\^{o} formula for fractional Brownian motion.
The functional It\^{o} formula with respect to a process with finite quadratic variation was derived in
Cont and Fourni\^{e} \cite{Cont10} by using the discretization techniques of F\"{o}llmer \cite{Follmer} type,
where the integral is F\"{o}llmer integral and the integrand is non-anticipative.
Cosso and Russo \cite{Russo4} also obtained a functional It\^{o} formula with respect to a process with finite quadratic variation via regularization approach,
where they considered the forward integral (Russo-Vollois integral \cite{Russo}).

Fractional Brownian motion with Hurst parameter $H \in (0, 1)$ is a zero mean Gaussian process $B^{H}=\{ B^{H}(t),t\geq 0 \}$ whose covariance is given by
\begin{equation}\label{1}
  \mathbb{E}\big[B^{H}(t)B^{H}(s)\big] = \frac{1}{2} (t^{2H} + s^{2H} - |t-s|^{2H}).
\end{equation}
If $H=\frac{1}{2}$, then the corresponding fractional Brownian motion is a classical Brownian motion.
When $H > \frac{1}{2}$, the process $B^{H}$ exhibits a long range dependence.
These properties make the fractional Brownian motion to be a useful driving noise in models arising in  finance,
 physics, telecommunication networks and other fields.
However, since the fractional Brownian motion is not a semimartingale,
the beautiful classical theory of stochastic calculus can not be applied to fractional Brownian motion.
As we know, there are essentially several different approaches in the literature in order to define stochastic integrals with respect to the fBm.
For example, Lin \cite{Lin} and Dai and Heyde \cite{Dai} introduced a stochastic integral as the limit of Riemann sums in the case $H > \frac{1}{2}$,
and an It\^{o} formula for fractional Brownian motion was obtained in \cite{Dai}.
Z\"{a}hle \cite{Zaehle} introduced a pathwise stochastic integral with respect to the fBm with parameter $H\in(0, 1)$.
The techniques of Malliavin calculus have firstly been used to develop the stochastic calculus for the fBm
in the pioneering work of Decreusefond and \"{U}st\"{u}nel \cite{Decreusefond}.
Along this way, this idea has been developed by many authors including theories and applications.
We refer to the works of Al\`{o}s, Mazet and Nualart \cite{Alos2}, Carmona and Coutin \cite{Carmona},
Duncan, Hu and Pasik-Duncan \cite{Hu}, Hu, Jolis and Tindel \cite{Huy}, Hu, Nualart and Song \cite{Songx},
Hu and {\O}ksendal \cite{Hu4}, Hu and Peng \cite{Peng}, and Nualart \cite{Nualart2} among others.
The advantage of the integral constructed by this method is that it has zero mean, and can be obtained as the limit of Riemann sums defined using Wick products.

In this paper, the functional It\^{o} calculus with respect to fractional Brownian motion with Hurst parameter $H> \frac{1}{2}$ is developed.
We firstly study two types of functional integrals.
The first type of integrals is Stratonovich integral with respect to c\`{a}dl\`{a}g (right continuous with left limits) process.
In particular, we discuss this kind of integral with respect to classical Brownian motion and fractional Brownian motion respectively.
The second type is Wick-It\^{o}-Skorohod integral with respect to fractional Brownian motion.
We emphasize that the integrands are allowed to be anticipative.
The mutual relations between these types of integrals  are studied.
It should be noted that there is a little difference in the relation between It\^{o} integral and Stratonovich integral
comparing with the classical case that they are both driven by classical Brownian motion.
Then we establish the functional It\^{o} formulas with respect to fractional Brownian motion,
where the integrals are of Stratonovich type and Wick-It\^{o}-Skorohod type respectively,
which extend the functional It\^{o} formulas for semimartingale (see Dupire \cite{Dup} and Cont and Fourni\'{e} \cite{Con}) to the case of non-semimartingale.

We point out that the proof to Theorem 3 in Cont and Fourni\'{e} \cite{Cont10} is nonprobabilistic,
while the approach to the first functional It\^{o} formula (Theorem \ref{32} below) is probabilistic.
The integral in the second functional It\^{o} formula (Theorem \ref{50} below) is Wick-It\^{o}-Skorohod integral,
which is different from the F\"{o}llmer integral in Cont and Fourni\'{e} \cite{Cont10}.
Moreover, the approach to obtain Theorem \ref{50} is Malliavin calculus approach.
In fact, our functional It\^{o} formulas are also different to the ones obtained in Cosso and Russo \cite{Russo4}
since the type of integrals and the approaches used in this paper are also different from Cosso and Russo \cite{Russo4}.
Finally, as an application, we deal with a class of fractional backward stochastic differential equations (BSDEs, for short).
A relation between fractional BSDEs with path-dependent coefficients and path-dependent partial differential equations (PDEs, for short) is also established.

This paper is organized as follows.
In Section 2, some existing results about functional It\^{o} calculus and fractional Brownian motion are presented.
Section 3 is devoted to studying Stratonovich type integral with respect to classical Brownian motion and fractional Brownian motion respectively.
A functional It\^{o} formula for fractional Brownian motion is also established in this section.
Wick-It\^{o}-Skorohod type integral with respect to the fractional Brownian motion is discussed in Section 4.
In Section 5, we deal with the fractional BSDEs with path-dependent coefficients.

\section{Preliminaries}

In this section, we recall some basic notions and results about functional It\^{o} calculus and fractional Brownian motion theory,
which are needed in the sequels.
The readers may refer to the articles such as Dupire \cite{Dup}, Cont and Fourni\'{e} \cite{Con},
Duncan et al. \cite{Hu}, Hu and {\O}ksendal \cite{Hu4} and Hu and Peng \cite{Peng}  for more details.

\subsection{Functional It\^{o} calculus}

 Let $T>0$ be fixed.
 For each $t\in [0,T]$, we denote $\Lambda_{t}$ the set of bounded c\`{a}dl\`{a}g $\mathbb{R}$-valued functions on $[0,t]$
 and $\Lambda=\bigcup_{t\in[0,T]}\Lambda_{t}$.
 For each $\gamma\in \Lambda_{T}$, the value of $\gamma$ at time $t$ is denoted by $\gamma(t)$ and the path of $\gamma$ up to time $t$ is
 denoted by $\gamma_{t}$, i.e., $\gamma_{t}=\gamma(r)_{0\leq r\leq t} \in \Lambda_{t}$.
 It is easy to see that $\gamma(r)=\gamma_{t}(r)$, for  $r\in [0,t]$.
 For each $\gamma_{t}\in \Lambda, \ s\geq t$ and $h\in \mathbb{R}$, we denote
  \[\begin{split}
    \gamma_{t}^{h}(r):=&\ \gamma(r)\mathbf{1}_{[0,t)}(r)+(\gamma(t)+h)\mathbf{1}_{\{t\}}, \ \ r\in [0,t],\\
      \gamma_{t,s}(r):=&\ \gamma(r)\mathbf{1}_{[0,t)}(r)+\gamma(t)\mathbf{1}_{[t,s]}(r), \ \ r\in[0,s].
  \end{split}\]
 It is clear that $\gamma_{t}^{h}\in \Lambda_{t}$ and $\gamma_{t,s}\in \Lambda_{s}$.
 For each $0\leq t\leq s\leq T$ and $\gamma_{t},\overline{\gamma}_{s}\in \Lambda$, we denote
\begin{equation*}
\begin{split}
       \|\gamma_{t}\|:=& \sup_{r\in[0,t]}|\gamma_{t}(r)|, \\
       \|\gamma_{t}-\overline{\gamma}_{s}\|:=&\sup_{r\in[0,s]}|\gamma_{t,s}(r)-\overline{\gamma}_{s}(r)|, \\
       d_{\infty}(\gamma_{t},\overline{\gamma}_{s}):=&\ \|\gamma_{t}-\overline{\gamma}_{s}\|+|t-s|.
\end{split}
\end{equation*}
 It is obvious that $\Lambda_{t}$ is a Banach space with respect to $\|\cdot \|$.
 Since $\Lambda$ is not a linear space, $d_{\infty}$ is not a norm.

 Now consider a function $F$ on $(\Lambda,d_{\infty})$.
 This function $F=F(\gamma_{t}^{h})$ can be regarded as a family of real valued functions:
 \begin{equation*}
   F(\gamma_{t}^{h})= F(t,\gamma(s)_{0\leq s<t},\gamma(t)+h), \ \gamma_{t}\in \Lambda_{t}, \ t\in[0,T], \ h\in \mathbb{R}.
 \end{equation*}
 It is also important to understand $F(\gamma_{t}^{h})$ as a function of $t, \ \gamma(s)_{0\leq s<t}, \ \gamma(t)$ and $h$.
 \begin{definition}
 A functional $F$ defined on $\Lambda$ is said to be continuous at $X_{t}\in \Lambda$, if for any $\varepsilon>0$ there exists $\delta>0$ such that
 for each $Y_{s}\in \Lambda$ with $d_{\infty}(X_{t},Y_{s})<\delta$,
 we have $|F(X_{t})-F(Y_{s})|<\varepsilon$.
 $F$ is said to be $\Lambda$-continuous if it is continuous at each $X_{t}\in \Lambda$.
 \end{definition}

 \begin{definition}
 Let $F:\Lambda\rightarrow \mathbb{R}$ and $X_{t}\in \Lambda$ be given.
 The horizontal derivative of $F$ at $X_{t}$ is defined as
 \begin{equation*}
   \Delta_{t}F(X_{t})=\lim_{h\rightarrow 0^{+}}\frac{F(X_{t,t+h})-F(X_{t})}{h}
 \end{equation*}
 if the corresponding limit exists.
 If $F$ is horizontally differentiable at each $X_{t}\in \Lambda$, we say $F$ is horizontally differentiable in $\Lambda$.
 \end{definition}

 \begin{definition}
 Let $F:\Lambda\rightarrow \mathbb{R}$ and $X_{t}\in \Lambda$ be given.
 The vertical derivative of $F$ at $X_{t}$ is defined as
 \begin{equation*}
   \Delta_{x}F(X_{t})=\lim_{h\rightarrow 0}\frac{F(X_{t}^{h})-F(X_{t})}{h}
 \end{equation*}
 if the corresponding limit exists.
 $F$ is said to be vertically differentiable in $\Lambda$ if the vertical derivative of $F$ at each $X_{t}\in \Lambda$ exists.
 We can similarly define $\Delta_{xx}F(X_{t})$.
 \end{definition}

 \begin{definition}
 Define $\mathbb{C}^{j,k}(\Lambda)$ as the set of functionals $F$ defined on $\Lambda$ which are $j$ orders horizontally
 and $k$ orders vertically differentiable in $\Lambda$ such that all these derivatives are $\Lambda$-continuous.
 \end{definition}

 \begin{example}
  If $F(X_{t})=f(t,X(t))$, with $f\in \mathbb{C}^{1,2}([0,T]\times \mathbb{R})$, then
  \begin{equation*}
     \Delta_{t}F= \partial _{t}f, \ \ \ \Delta_{x}F= \partial _{x}f, \ \ \ \Delta_{xx}F= \partial _{xx}f,
  \end{equation*}
  which are the classic derivatives.
  In general, these derivatives also satisfy the classic properties: Linearity, product and chain rule.
 \end{example}

 \begin{definition}
 For a functional $F$ from $\Lambda\rightarrow \mathbb{R}$, the functional It\^{o} type integral is defined as
 \begin{equation}\label{21}
   \int_0^T F(X_{t}) dX(t) := \lim_{n\rightarrow \infty} \sum_{i=1}^{n} F(X_{t_{i-1}})(X(t_{i})-X(t_{i-1}))
 \end{equation}
 if the limit exists, where $0=t_{0}< t_{1}<...<t_{n}=T$ is a partition of $[0,T]$.
 \end{definition}

 The following functional It\^{o} formula was firstly obtained by Dupire \cite{Dup} and then by Cont and Fourni\'{e} \cite{Con} for a more general formulation.
 \begin{theorem}\label{20}
 Let $(\Omega,\mathcal{F},(\mathcal{F}_{t})_{t\in[0,T]},P)$ be a probability space and $W$ be a classical Brownian motion.
 Let $X$ be an It\^{o} process of the form
 \begin{equation}\label{23}
   X(t)=X(0)+\int_0^t \psi(s) ds + \int_0^t \varphi(s) dW(s),
 \end{equation}
 where $X(0)$ is a constant, $\psi(t)$ and $\varphi(t)$ are progressively measurable processes and satisfying
 $ \mathbb{E} \int_0^T (|\psi(t)|^{2} + |\varphi(t)|^{2}) dt  < \infty.$
 If $F$ is in $\mathbb{C}^{1,2}(\Lambda)$, then for each $t\in[0,T]$,
  \begin{equation}\label{24}
   \begin{split}
    F(X_{t})=&\ F(X_{0})+\int_0^t \Delta_{s}F(X_{s}) ds+\int_0^t \Delta_{x}F(X_{s}) \psi(s) ds \\
           & + \int_0^t \Delta_{x}F(X_{s})\varphi(s) dW(s) +\frac{1}{2}\int_0^t \Delta_{xx}F(X_{s})\varphi(s)^{2} ds.
   \end{split}
  \end{equation}
 \end{theorem}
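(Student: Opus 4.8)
The plan is to prove the formula by a limiting argument over a refining sequence of partitions of $[0,t]$, using the horizontal/vertical decomposition of the path increments that is natural in the functional setting. Fix $t\in[0,T]$ and a partition $\pi_{n}: 0=t_{0}<t_{1}<\cdots<t_{n}=t$ whose mesh tends to $0$. Associated with $\pi_{n}$ I introduce the step (piecewise constant) approximation $X^{n}$ of the path of $X$, defined by $X^{n}(s)=X(t_{i-1})$ for $s\in[t_{i-1},t_{i})$ and $X^{n}(t)=X(t)$. Because $X$ is a.s. continuous on $[0,t]$, one has $d_{\infty}(X^{n}_{t},X_{t})\to 0$ a.s., and hence, by $\Lambda$-continuity of $F$, $F(X^{n}_{t})\to F(X_{t})$, while $F(X^{n}_{0})=F(X_{0})$. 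Writing the telescoping sum $F(X^{n}_{t})-F(X_{0})=\sum_{i=1}^{n}\big[F(X^{n}_{t_{i}})-F(X^{n}_{t_{i-1}})\big]$, it suffices to identify the limit of the right-hand side.

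The key observation is that each increment splits into a horizontal and a vertical part. With the flat extension notation of the excerpt, $X^{n}_{t_{i-1},t_{i}}$ freezes the value $X(t_{i-1})$ on $[t_{i-1},t_{i}]$, and one checks directly from the definition of $X^{n}$ that $X^{n}_{t_{i}}=\big(X^{n}_{t_{i-1},t_{i}}\big)^{h_{i}}$ with $h_{i}:=X(t_{i})-X(t_{i-1})$. Thus
\[
F(X^{n}_{t_{i}})-F(X^{n}_{t_{i-1}})=\underbrace{\big[F(X^{n}_{t_{i-1},t_{i}})-F(X^{n}_{t_{i-1}})\big]}_{\text{horizontal}}+\underbrace{\big[F\big((X^{n}_{t_{i-1},t_{i}})^{h_{i}}\big)-F(X^{n}_{t_{i-1},t_{i}})\big]}_{\text{vertical}}.
\]
For the horizontal term I would use that, by the definition of the horizontal derivative, $s\mapsto F(X^{n}_{t_{i-1},s})$ is differentiable on $[t_{i-1},t_{i}]$ with derivative $\Delta_{s}F(X^{n}_{t_{i-1},s})$, so the horizontal term equals $\int_{t_{i-1}}^{t_{i}}\Delta_{s}F(X^{n}_{t_{i-1},s})\,ds$; summing and using $\Lambda$-continuity of $\Delta_{s}F$ gives the limit $\int_{0}^{t}\Delta_{s}F(X_{s})\,ds$. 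For the vertical term I would Taylor-expand in the bump variable to second order, using that $F\in\mathbb{C}^{1,2}(\Lambda)$:
\[
F\big((X^{n}_{t_{i-1},t_{i}})^{h_{i}}\big)-F(X^{n}_{t_{i-1},t_{i}})=\Delta_{x}F(X^{n}_{t_{i-1},t_{i}})\,h_{i}+\tfrac{1}{2}\Delta_{xx}F(X^{n}_{t_{i-1},t_{i}})\,h_{i}^{2}+R_{i},
\]
where $R_{i}$ is a remainder controlled by the modulus of continuity of $\Delta_{xx}F$ times $h_{i}^{2}$.

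It then remains to pass to the limit in each sum. The first-order sum $\sum_{i}\Delta_{x}F(X^{n}_{t_{i-1},t_{i}})\,h_{i}$ is a non-anticipative (left-point) Riemann sum, since its coefficient is $\mathcal{F}_{t_{i-1}}$-measurable; substituting the dynamics \eqref{23} it converges (in $L^{2}$, along a subsequence a.s.) to $\int_{0}^{t}\Delta_{x}F(X_{s})\psi(s)\,ds+\int_{0}^{t}\Delta_{x}F(X_{s})\varphi(s)\,dW(s)$. The second-order sum $\tfrac{1}{2}\sum_{i}\Delta_{xx}F(X^{n}_{t_{i-1},t_{i}})\,h_{i}^{2}$ is handled using the quadratic variation of the It\^{o} process, namely $\sum_{i}h_{i}^{2}\to\int_{0}^{t}\varphi(s)^{2}\,ds$ in $L^{1}$, together with the $\Lambda$-continuity of $\Delta_{xx}F$; this yields $\tfrac{1}{2}\int_{0}^{t}\Delta_{xx}F(X_{s})\varphi(s)^{2}\,ds$. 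Collecting the four limits produces exactly \eqref{24}.

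The main obstacle is the rigorous control of the second-order term and of the remainder $\sum_{i}R_{i}$. Since $F$ is only $\mathbb{C}^{1,2}$, there is no third vertical derivative, so $R_{i}$ must be estimated purely through the uniform continuity of $\Delta_{xx}F$ on the (a.s. bounded) family of approximating paths; combined with the uniform boundedness of $\sum_{i}h_{i}^{2}$ this forces $\sum_{i}R_{i}\to 0$. A related difficulty is that $\Delta_{xx}F(X^{n}_{t_{i-1},t_{i}})$ is correlated with $h_{i}^{2}$, so the convergence of the second-order sum cannot be read off directly from the quadratic-variation limit; one first replaces $\Delta_{xx}F(X^{n}_{t_{i-1},t_{i}})$ by its frozen value at $t_{i-1}$, paying a vanishing error via $\Lambda$-continuity, so that the increments $h_{i}^{2}$ pair with an adapted, left-continuous coefficient. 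To make all these estimates uniform I would first localize by a stopping time $\tau_{N}=\inf\{s:\|X_{s}\|\geq N\}$, establish the formula up to $\tau_{N}$ where all functionals are bounded, and then let $N\to\infty$.
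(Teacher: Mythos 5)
The paper does not actually prove Theorem \ref{20}: it is quoted as a known result of Dupire \cite{Dup} and Cont and Fourni\'{e} \cite{Con}. Your proposed argument --- telescoping over a refining partition of a piecewise-constant path approximation, splitting each increment into a horizontal part handled by the fundamental theorem of calculus for $\Delta_{s}F$ and a vertical part Taylor-expanded to second order, then identifying the left-point Riemann sums and the quadratic-variation term after freezing the coefficient at $t_{i-1}$ and localizing by stopping times --- is precisely the proof in those references, and is also the same discretization scheme the paper itself uses to prove its fractional analogue, Theorem \ref{32}; the proposal is essentially correct.
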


\subsection{Fractional Brownian motion}

Let $\Omega = C_{0}([0,T])$ be the space of continuous functions $\omega$ from $[0,T]$ to $\mathbb{R}$ with $\omega(0) = 0$, and
let $(\Omega,\mathcal{F},P)$ be a complete probability space.
The coordinate process $B^{H}:\Omega\rightarrow \mathbb{R}$ defined as
\begin{equation*}
  B^{H}(t,\omega)=\omega(t), \ \ \omega\in \Omega
\end{equation*}
is a Gaussian process satisfying (\ref{1}).
The process $B^{H}=\{ B^{H}(t),t\geq 0 \}$ is called the canonical fractional Brownian motion with Hurst parameter $H\in (0,1)$.
When $H=\frac{1}{2}$, this process is the classical Brownian motion denoted by $\{ W(t),t\geq 0 \}$.
It is elementary to verify that the fractional Brownian motions $B^{H}$  are not semi-martingales if $H\neq\frac{1}{2}$.
Throughout this paper we assume that $H\in(\frac{1}{2},1)$ is arbitrary but fixed.

Let $L^{2}(\Omega,\mathcal{F},P)$ be the space of all random variables $F:\Omega\rightarrow \mathbb{R}$ such that
$\mathbb{E}[|F|^{2}]< \infty.$
Denote $\phi(x) = H(2H - 1)|x|^{2H-2}, \ x \in \mathbb{R}$.
Let $\xi$ and $\eta$ be two continuous functions on $[0,T]$.
We define
\begin{equation*}
  \langle \xi,\eta \rangle_{t} = \int_0^t \int_0^t \phi(u-v) \xi(u) \eta(v) dudv,
\end{equation*}
and $ \| \xi \|_{t}^{2} =  \langle \xi,\xi  \rangle_{t}$.
Note that, for any $t\in [0,T], \ \langle \xi,\eta \rangle_{t}$ is a Hilbert scalar product.
Let $\mathcal{H}$ be the completion of the continuous functions under this Hilbert norm.
The elements of $\mathcal{H}$ may be distributions.
Let $|\mathcal{H}|$ be the linear space of measurable functions $\xi$ on $[0,T]$ such that
\begin{equation*}
  \| \xi \|_{|\mathcal{H}|}^{2} = \int_0^T \int_0^T \phi(u-v) |\xi(u)| |\xi(v)| dudv.
\end{equation*}
It is not difficult to show that $|\mathcal{H}|$ is a Banach space with the norm $\| \cdot \|_{|\mathcal{H}|}$.
Let $\mathcal{P}_{T}$ be the set of all polynomials of fractional Brownian motion in $[0,T]$, i.e., it contains all elements of the form
\begin{equation*}
  F(\omega) = f \left(\int_0^T \xi_{1}(t) dB^{H}(t),...,\int_0^T \xi_{n}(t) dB^{H}(t) \right),
\end{equation*}
where $n\geq 1, \ f\in \mathbb{C}^{\infty}_{b}(\mathbb{R}^{n})$ ($f$ and all its partial derivatives are bounded).
The Malliavin derivative operator $D_{s}^{H}$ of an element $F\in \mathcal{P}_{T}$ is defined as follows:
\begin{equation}\label{211}
  D_{s}^{H}F = \sum\limits_{i=1}^{n} \frac{\partial f}{\partial x_{i}}
               \left(\int_0^T \xi_{1}(t) dB^{H}(t),...,\int_0^T \xi_{n}(t) dB^{H}(t) \right)\xi_{i}(s), \ \ s\in [0,T].
\end{equation}
Since the divergence operator
  $D^{H}:L^{2}(\Omega,\mathcal{F}, P) \rightarrow (\Omega,\mathcal{F}, \mathcal{H})$ is closable,
we can consider the space $\mathbb{D}^{1,2}$ be the completion of $\mathcal{P}_{T}$ with the norm
\begin{equation*}
  \| F \|^{2}_{1,2} = \mathbb{E}|F|^{2} + \mathbb{E}\|D^{H}_{s} F\|^{2}_{T}.
\end{equation*}
In a similar way, given a Hilbert space $V$ we denote by  $\mathbb{D}^{1,2}(V)$ the corresponding Sobolev space of $V$-valued random variables.
We also introduce another derivative
\begin{equation*}
  D_{t}^{\phi}F = \int_0^T \phi(t-s) D_{s}^{H}F ds.
\end{equation*}

\begin{proposition}[Duncan et al. \cite{Hu}]\label{25}
Let $g\in\mathcal{H}$, $F$ and $\langle D^{H}F,g \rangle_{T}$ belong to $L^{2}(\Omega,\mathcal{F}, P)$, then
\begin{equation}\label{28}
  F \diamond \int_0^T g(t) dB^{H}(t) = F \int_0^T g(t) dB^{H}(t) -  \langle D^{H}F,g \rangle_{T},
\end{equation}
where ``$\diamond$'' denotes the Wick product.
\end{proposition}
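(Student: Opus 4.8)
The plan is to verify the identity \eqref{28} first on a dense, analytically convenient class of functionals — the stochastic (Wick) exponentials — where both sides can be computed explicitly, and then to remove the restriction by a closability-and-density argument. For $h\in\mathcal{H}$ I set
\[
\mathcal{E}(h) = \exp\Big(\int_0^T h(t)\,dB^{H}(t) - \tfrac{1}{2}\|h\|_{T}^{2}\Big),
\]
which coincides with the Wick exponential $\exp^{\diamond}\big(\int_0^T h\,dB^{H}\big)$ of the centered Gaussian variable $\int_0^T h\,dB^{H}$. I would use two structural facts: the Wick product is commutative and associative, so the binomial identity holds for Wick powers and hence $\mathcal{E}(h)\diamond\mathcal{E}(k)=\mathcal{E}(h+k)$; and the derivative rule \eqref{211}, extended to smooth functionals by the chain rule, gives $D_{s}^{H}\mathcal{E}(h)=h(s)\mathcal{E}(h)$, whence $\langle D^{H}\mathcal{E}(h),g\rangle_{T}=\mathcal{E}(h)\langle h,g\rangle_{T}$.

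The heart of the argument is the case $F=\mathcal{E}(h)$. Writing $\int_0^T g\,dB^{H} = \frac{d}{d\lambda}\big|_{\lambda=0}\mathcal{E}(\lambda g)$ and using bilinearity of the Wick product to interchange the $\lambda$-derivative with $\diamond$, I would compute
\[
\mathcal{E}(h)\diamond\!\int_0^T\! g\,dB^{H} = \frac{d}{d\lambda}\Big|_{\lambda=0}\mathcal{E}(h)\diamond\mathcal{E}(\lambda g) = \frac{d}{d\lambda}\Big|_{\lambda=0}\mathcal{E}(h+\lambda g).
\]
Since $\mathcal{E}(h+\lambda g)=\exp\big(\int_0^T h\,dB^{H}+\lambda\int_0^T g\,dB^{H}-\tfrac12\|h\|_{T}^{2}-\lambda\langle h,g\rangle_{T}-\tfrac{\lambda^{2}}{2}\|g\|_{T}^{2}\big)$, differentiating at $\lambda=0$ yields $\mathcal{E}(h)\big(\int_0^T g\,dB^{H}-\langle h,g\rangle_{T}\big)$. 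Comparing with the expression for $\langle D^{H}\mathcal{E}(h),g\rangle_{T}$ above gives exactly \eqref{28} for $F=\mathcal{E}(h)$. By linearity and polarization this extends to finite linear combinations of exponentials, and then to all of $\mathcal{P}_{T}$, since differentiating $\mathcal{E}(\lambda_{1} h_{1}+\cdots+\lambda_{n} h_{n})$ in the parameters generates the Wick monomials spanning the polynomial functionals.

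The remaining step, and the one I expect to be the main obstacle, is the passage from $\mathcal{P}_{T}$ to a general $F$ with $F,\langle D^{H}F,g\rangle_{T}\in L^{2}(\Omega,\mathcal{F},P)$. One approximates $F$ by $F_{n}\in\mathcal{P}_{T}$ in the $\|\cdot\|_{1,2}$-norm; closability of $D^{H}$ then secures $F_{n}\to F$ and $\langle D^{H}F_{n},g\rangle_{T}\to\langle D^{H}F,g\rangle_{T}$ in $L^{2}$, so the right-hand side of \eqref{28} converges. The delicate point is the Wick-product term on the left, because $\diamond$ is not continuous for the $L^{2}$-topology and one cannot naively pass to the limit there. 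The clean way around this is to read \eqref{28} itself as the definition of the (a priori only distribution-valued) object $F\diamond\int_0^T g\,dB^{H}$ through its $L^{2}$ right-hand side, or equivalently to identify $F\diamond\int_0^T g\,dB^{H}$ with the divergence integral $\delta^{H}(Fg)$ and invoke the integration-by-parts formula $\delta^{H}(Fg)=F\int_0^T g\,dB^{H}-\langle D^{H}F,g\rangle_{T}$; the $L^{2}$-convergence of all three terms then makes the extension rigorous and shows that the identity persists under the stated integrability hypotheses.
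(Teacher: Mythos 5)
The paper does not prove this proposition: it is quoted verbatim from Duncan, Hu and Pasik-Duncan \cite{Hu}, so there is no internal proof to compare against. Your argument via Wick exponentials $\mathcal{E}(h)$ --- establishing the identity on exponentials through $\mathcal{E}(h)\diamond\mathcal{E}(k)=\mathcal{E}(h+k)$ and $D^{H}_{s}\mathcal{E}(h)=h(s)\mathcal{E}(h)$, then extending by linearity, density of the exponential span in $\mathbb{D}^{1,2}$, and closability of $D^{H}$ --- is correct and is essentially the proof given in that reference, where the Wick product is in fact \emph{defined} on exponentials by $\mathcal{E}(h)\diamond\mathcal{E}(k)=\mathcal{E}(h+k)$. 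You also correctly identify the only delicate points: justifying the interchange of $\frac{d}{d\lambda}\big|_{\lambda=0}$ with $\diamond$ (which is cleanest done by computing the $L^{2}$-limit of $\lambda^{-1}\bigl(\mathcal{E}(h+\lambda g)-\mathcal{E}(h)\bigr)$ directly), and the fact that the left-hand side for general $F$ must be read as the divergence $\delta^{H}(Fg)$, since $\diamond$ is not $L^{2}$-continuous; with those caveats handled as you indicate, the proof is complete.
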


\begin{proposition}[Duncan et al. \cite{Hu}]\label{26}
If $F(s)$ is a continuous stochastic process such that
 $ \mathbb{E} \big[ \| F \|_{T}^{2} + \big(\int_0^T D^{\phi}_{s}F(s) ds\big)^{2} \big] < \infty,$
then the Wick-It\^{o}-Skorohod type stochastic integral
\begin{equation}\label{29}
\int_0^T F(s) \diamond d B^{H}(s):= \lim_{n \rightarrow 0} \sum_{i=1}^{n} F(t_{i-1}) \diamond (B^{H}(t_{i})-B^{H}(t_{i-1}))
\end{equation}
exists in $L^{2}(\Omega,\mathcal{F}, P)$.
 Moreover, we have
$\mathbb{E} \int_0^T F(s) \diamond d B^{H}(s) = 0,$
and
\begin{equation*}
   \mathbb{E} | \int_0^T F(s) \diamond d B^{H}(s) |^{2}
 = \mathbb{E} \big[ \| F \|_{T}^{2} + \big(\int_0^T D^{\phi}_{s}F(s) ds\big)^{2} \big].
\end{equation*}
\end{proposition}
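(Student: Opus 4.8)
The plan is to identify the Wick-It\^{o}-Skorohod integral with the divergence (Skorohod) operator of fractional Brownian motion and to prove the three assertions first for step integrands, then transfer them to the general case through the isometry itself. Fix a partition $0=t_{0}<t_{1}<\cdots<t_{n}=T$ and set $g_{i}=\mathbf{1}_{[t_{i-1},t_{i}]}$, so that $B^{H}(t_{i})-B^{H}(t_{i-1})=\int_{0}^{T}g_{i}(t)\,dB^{H}(t)$. Using Proposition \ref{25} I would rewrite each summand of the Riemann sum $S_{n}=\sum_{i=1}^{n}F(t_{i-1})\diamond\big(B^{H}(t_{i})-B^{H}(t_{i-1})\big)$ as
\begin{equation*}
F(t_{i-1})\diamond\int_{0}^{T}g_{i}\,dB^{H}=F(t_{i-1})\int_{0}^{T}g_{i}\,dB^{H}-\langle D^{H}F(t_{i-1}),g_{i}\rangle_{T},
\end{equation*}
which reduces the whole problem to ordinary products and $\phi$-inner products, to which the Malliavin integration-by-parts (duality) formula $\mathbb{E}[\Phi\int_{0}^{T}g\,dB^{H}]=\mathbb{E}[\langle D^{H}\Phi,g\rangle_{T}]$ applies.

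The zero-mean statement then drops out at once: taking expectations term by term and using duality with $\Phi=F(t_{i-1})$, $g=g_{i}$ shows that $\mathbb{E}[F(t_{i-1})\int_{0}^{T}g_{i}\,dB^{H}]=\mathbb{E}[\langle D^{H}F(t_{i-1}),g_{i}\rangle_{T}]$ cancels the correction term exactly, so $\mathbb{E}[S_{n}]=0$ for every partition, and this survives the $L^{2}$ limit.

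The heart of the proof is the second moment. I would expand $\mathbb{E}[S_{n}^{2}]=\sum_{i,j}\mathbb{E}[(F(t_{i-1})\diamond\Delta_{i}B^{H})(F(t_{j-1})\diamond\Delta_{j}B^{H})]$, substitute the Proposition \ref{25} identity into each factor, and evaluate the resulting expectations by applying the duality formula twice (peeling off the two first-chaos factors $\Delta_{i}B^{H}$, $\Delta_{j}B^{H}$). After the second-order Malliavin terms cancel, one is left with the bilinear identity
\begin{equation*}
\mathbb{E}\big[(F\diamond\int g\,dB^{H})(G\diamond\int h\,dB^{H})\big]=\mathbb{E}[FG]\,\langle g,h\rangle_{T}+\mathbb{E}\big[\langle D^{H}F,h\rangle_{T}\,\langle D^{H}G,g\rangle_{T}\big].
\end{equation*}
Summing over $i,j$, the first group is a Riemann sum for $\int_{0}^{T}\int_{0}^{T}\phi(u-v)F(u)F(v)\,du\,dv=\|F\|_{T}^{2}$, while the second, via the approximation $\langle D^{H}F(t_{i-1}),g_{j}\rangle_{T}\approx(t_{j}-t_{j-1})\,D^{\phi}_{t_{j-1}}F(t_{i-1})$, forms a double Riemann sum converging to the trace correction. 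The continuity of $F$ and of its Malliavin derivative, together with the hypothesis $\mathbb{E}[\|F\|_{T}^{2}+(\int_{0}^{T}D^{\phi}_{s}F(s)\,ds)^{2}]<\infty$, then lets me pass to the limit and obtain the asserted isometry $\mathbb{E}|\int_{0}^{T}F(s)\diamond dB^{H}(s)|^{2}=\mathbb{E}[\|F\|_{T}^{2}+(\int_{0}^{T}D^{\phi}_{s}F(s)\,ds)^{2}]$.

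Existence of the integral then follows from the isometry applied to the difference of the Riemann sums associated with two partitions: it shows that $(S_{n})$ is Cauchy in $L^{2}(\Omega,\mathcal{F},P)$, and the limit, which is by definition $\int_{0}^{T}F(s)\diamond dB^{H}(s)$, inherits both moment identities by continuity of the norm. The step I expect to be the main obstacle is this second-moment computation: the repeated integration by parts is legitimate only for sufficiently regular integrands, so I would first establish the bilinear identity on step processes with $\mathcal{P}_{T}$-valued (hence smooth, possibly anticipating) coefficients and then extend by a closability/density argument in $\mathbb{D}^{1,2}$; in parallel one must control the convergence of the double Riemann sum for the trace term, which is exactly where the singularity of $\phi$ and the fractional inner product $\langle\cdot,\cdot\rangle_{T}$ make the estimates delicate.
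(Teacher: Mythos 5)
First, a point of comparison: the paper does not prove this proposition at all --- it is recalled verbatim from Duncan, Hu and Pasik-Duncan \cite{Hu} as a preliminary fact, so there is no in-paper argument to measure your sketch against. Judged on its own merits, your route (reduce the Riemann sums to ordinary products via Proposition \ref{25}, use the duality formula for the mean, and a bilinear covariance identity for the second moment, then get existence from the resulting Cauchy property) is the standard and correct strategy, and the zero-mean part is fine.

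The genuine gap is in the final identification of the second-moment correction term. Your bilinear identity is correct, and with $g_{j}=\mathbf{1}_{[t_{j-1},t_{j}]}$ one has $\langle D^{H}F(t_{i-1}),g_{j}\rangle_{T}=\int_{t_{j-1}}^{t_{j}}D^{\phi}_{u}F(t_{i-1})\,du$, so the correction in $\mathbb{E}[S_{n}^{2}]$ is
\begin{equation*}
\sum_{i,j}\mathbb{E}\big[\langle D^{H}F(t_{i-1}),g_{j}\rangle_{T}\,\langle D^{H}F(t_{j-1}),g_{i}\rangle_{T}\big]
\ \longrightarrow\ \mathbb{E}\int_0^T\int_0^T D^{\phi}_{s}F(t)\,D^{\phi}_{t}F(s)\,ds\,dt,
\end{equation*}
that is, the trace of the \emph{square} of the kernel $(s,t)\mapsto D^{\phi}_{s}F(t)$. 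The proposition's right-hand side instead contains the square of the \emph{trace}, $\big(\int_0^T D^{\phi}_{s}F(s)\,ds\big)^{2}=\int_0^T\int_0^T D^{\phi}_{s}F(s)\,D^{\phi}_{t}F(t)\,ds\,dt$. These two double integrals are different objects in general (they coincide, for example, when $D^{H}F(t)=h(t)\,D^{H}G$ for a fixed random variable $G$, but not for a generic continuous process, nor even for $F(t)=F(B^{H}_{t})$ with the derivative (\ref{51})), and your sketch silently equates them in the phrase ``converging to the trace correction.'' A second, related problem is that the stated integrability hypothesis only controls the diagonal quantity $\int_0^T D^{\phi}_{s}F(s)\,ds$, which does not obviously dominate the off-diagonal double Riemann sum whose limit you need; so both the passage to the limit and the Cauchy argument for existence require an estimate you have not supplied. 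To close the proof you must either show, under the stated hypotheses, that the two double integrals agree, or carry the argument through with the $\int_0^T\int_0^T D^{\phi}_{s}F(t)\,D^{\phi}_{t}F(s)\,ds\,dt$ form of the correction (which is what your computation actually produces) and reconcile that with the statement being proved.
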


\section{Stratonovich type integral}

In this section, we study Stratonovich type integral with respect to c\`{a}dl\`{a}g functions.
\begin{definition}
 For a functional $F$ from $\Lambda\rightarrow \mathbb{R}$, the Stratonovich type integral is defined as
 \begin{equation}\label{41}
   \int_0^T F(X_{t}) \circ dX(t) := \lim_{n\rightarrow \infty} \sum_{i=1}^{n} F(X_{\frac{t_{i-1}+t_{i}}{2}})(X(t_{i})-X(t_{i-1}))
 \end{equation}
 whenever the limit exists in the sense of $L^{2}(\Omega,\mathcal{F},P)$, where $0=t_{0}< t_{1}<...<t_{n}=T$ is a partition of $[0,T]$.
It is actually a classical Stratonovich type integral as the integrand is adapted.
 \end{definition}

Next we discuss  the Stratonovich type integral driven by classical Brownian motion and fractional Brownian motion respectively.

\subsection{The case of classical Brownian motion}\label{30}

We firstly study the relation between Stratonovich integral (\ref{41}) and It\^{o} integral (\ref{21}).
It points out that there is a little difference for the relation between It\^{o} integral and Stratonovich integral comparing with the classical case.

Suppose $F:\Lambda\rightarrow \mathbb{R}$ is both It\^{o} integrable and Stratonovich integrable with respect to classical Brownian motion.
Let $\pi:0=t_{0}<t_{1}<t_{2}<\cdot \cdot \cdot <t_{n}=T$ be a partition of the interval $[0,T]$.
From the definition, we have
\begin{align}\label{33}
    \int_0^T F(W_{t}) \circ dW(t)
 = &\lim_{n\rightarrow \infty} \sum_{i=1}^{n} F(W_{\frac{t_{i-1}+t_{i}}{2}})(W(t_{i})-W(t_{i-1})) \nonumber\\
 = &\lim_{n\rightarrow \infty} \bigg[ \sum_{i=1}^{n} F(W_{\frac{t_{i-1}+t_{i}}{2}})(W(t_{i})-W(\frac{t_{i-1}+t_{i}}{2})) \nonumber\\
   &+ \sum_{i=1}^{n} \big(F(W_{\frac{t_{i-1}+t_{i}}{2}}) - F(W_{t_{i-1}})\big)\big(W(\frac{t_{i-1}+t_{i}}{2}) - W(t_{i-1})\big) \nonumber\\
   &+ \sum_{i=1}^{n} F(W_{t_{i-1}})\big(W(\frac{t_{i-1}+t_{i}}{2}) - W(t_{i-1})\big)\bigg] \nonumber\\
 =:& \lim_{n\rightarrow \infty} (A_{1}^{n} + A_{2}^{n} + A_{3}^{n}).
\end{align}
It is easy to see that
\begin{equation*}
   \lim_{n\rightarrow \infty}(A_{1}^{n} + A_{3}^{n}) = \int_0^T F(W_{t}) dW(t).
\end{equation*}
For the second term $A_{2}^{n}$ in the right hand side of (\ref{33}), for simplicity, we denote
$$t_{i-1}^{i}=\frac{t_{i-1}+t_{i}}{2}, \ \ \  \delta t_{i-1}^{i}=t_{i-1}^{i}-t_{i-1}, \ \ \ \delta W_{i-1}^{i}=W(t_{i-1}^{i}) - W(t_{i-1}).$$
Define
\begin{equation*}
\begin{split}
  Y_{t_{i-1}^{i}}(s)=&\ W(s)1_{[0,t_{i-1})}(s) + W(t_{i-1})1_{[t_{i-1},t_{i-1}^{i})}(s) + W(t_{i-1}^{i})1_{\{t_{i-1}^{i}\}},\\
  Z_{t_{i-1}^{i}}(s)=&\ W(s)1_{[0,t_{i-1})}(s) + W(t_{i-1})1_{[t_{i-1},t_{i-1}^{i}]}(s).
\end{split}
\end{equation*}
If $F\in \mathbb{C}^{1,1}(\Lambda)$, then
\begin{gather}\label{42}
\begin{align}
 A_{2}^{n}&= \sum_{i=1}^{n}\big(F(W_{t_{i-1}^{i}}) - F(W_{t_{i-1}})\big) \delta W_{i-1}^{i} \nonumber\\
      &= \sum_{i=1}^{n}\bigg[F(W_{t_{i-1}^{i}})- F(Y_{t_{i-1}^{i}}) + F(Y_{t_{i-1}^{i}}) - F(Z_{t_{i-1}^{i}})
       + F(Z_{t_{i-1}^{i}}) - F(Y_{t_{i-1}})\bigg]\delta W_{i-1}^{i} \nonumber\\
      &= \sum_{i=1}^{n}\bigg[\big(F(W_{t_{i-1}^{i}})- F(Y_{t_{i-1}^{i}})\big) + \Delta_{x} F(Z^{h_{i}}_{t_{i-1}^{i}}) \delta W_{i-1}^{i} +\Delta_{t} F(Y_{t_{i-1},y^{i}}) \delta t_{i-1}^{i}\bigg] \delta W_{i-1}^{i} \nonumber\\
      &=:A_{2,1}^{n}+A_{2,2}^{n}+A_{2,3}^{n},
\end{align}
\end{gather}
where $h_{i}\in (0,\delta W_{i-1}^{i})$ and $y_{i}\in (0,\delta t_{i-1}^{i})$.
By $\Lambda$-continuity of $F$, the term
   $A_{2,1}^{n}\xrightarrow{n} 0$.
Similarly, by ordinary dominated convergence,
\begin{equation*}
  A_{2,2}^{n}=\sum_{i=1}^{n} \Delta_{x} F(Z^{h_{i}}_{t_{i-1}^{i}}) (\delta W_{i-1}^{i})^{2} \stackrel{n}{\longrightarrow}
    \frac{1}{2}\int_0^T \Delta_{x} F(W_{t}) dt.
\end{equation*}
For the term $A_{2,3}^{n}$, it converges to 0 in the sense of mean square.
In fact,
\begin{equation*}
    \mathbb{E} \bigg[\big(\sum_{i=1}^{n} \Delta_{t}  F(Y_{t_{i-1},y^{i}}) \delta t_{i-1}^{i} \delta W_{i-1}^{i} \big)^{2}\bigg]
  = \sum_{i=1}^{n} \mathbb{E}\big(\Delta_{t}  F(Y_{t_{i-1},y^{i}}) \big)^{2} (\delta t_{i-1}^{i})^{3}
  \stackrel{n}{\longrightarrow} 0.
\end{equation*}
We conclude from above:
\begin{equation*}
    \int_0^T F(W_{t}) \circ dW(t) = \int_0^T F(W_{t}) dW(t) +  \frac{1}{2}\int_0^T \Delta_{x} F(W_{t}) dt.
\end{equation*}

\begin{remark}
It is interesting for the emergence of the term $A^{n}_{2,3}$ in (\ref{42}), which is non-existent in the classical case.
Although the term $A^{n}_{2,3}$ is zero as $n\rightarrow \infty$, we need $F\in \mathbb{C}^{1,1}(\Lambda)$,
not in $\mathbb{C}^{0,1}(\Lambda)$ of the classical case.
\end{remark}

From the above discussion, we directly have the following result.
\begin{proposition}\label{43}
Let $F$ be a functional belongs to $\mathbb{C}^{1,1}(\Lambda)$, if $\{X(t), t\geq 0\}$ is an It\^{o} process defined in (\ref{23}), then
\begin{equation*}
    \int_0^T F(X_{t}) \circ dW(t) = \int_0^T F(X_{t}) dW(t) +  \frac{1}{2}\int_0^T \Delta_{x} F(X_{t}) \varphi(t) dt.
\end{equation*}
\end{proposition}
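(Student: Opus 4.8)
The plan is to repeat the computation carried out just above for $\int_0^T F(W_t)\circ dW(t)$, keeping the Brownian increments as integrator but now evaluating $F$ along the It\^o process $X$ of (\ref{23}). First I would start from the definition (\ref{41}), write $t_{i-1}^i=\frac{t_{i-1}+t_i}{2}$ and $\delta W_{i-1}^i=W(t_{i-1}^i)-W(t_{i-1})$, and perform the same algebraic splitting as in (\ref{33}) into three sums $A_1^n+A_2^n+A_3^n$. Exactly as before, $A_1^n+A_3^n$ is a left-endpoint Riemann sum over the refined partition $\{t_{i-1},t_{i-1}^i,t_i\}$ and therefore converges in $L^2(\Omega,\mathcal F,P)$ to the functional It\^o integral $\int_0^T F(X_t)\,dW(t)$. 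The only new object is the cross term $A_2^n=\sum_{i=1}^n\big(F(X_{t_{i-1}^i})-F(X_{t_{i-1}})\big)\delta W_{i-1}^i$.

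To handle $A_2^n$ I would introduce the intermediate paths $Y_{t_{i-1}^i}$ and $Z_{t_{i-1}^i}$ built from $X$ (in place of $W$) and expand $F(X_{t_{i-1}^i})-F(X_{t_{i-1}})$ as in (\ref{42}), using $F\in\mathbb C^{1,1}(\Lambda)$. This gives $A_2^n=A_{2,1}^n+A_{2,2}^n+A_{2,3}^n$, where $A_{2,1}^n\to 0$ by $\Lambda$-continuity and $A_{2,3}^n\to 0$ in mean square by the same third-order estimate on $(\delta t_{i-1}^i)^{3}$. The vertical piece, however, now reads
\[
 A_{2,2}^n=\sum_{i=1}^n \Delta_x F(Z_{t_{i-1}^i}^{h_i})\,\delta X_{i-1}^i\,\delta W_{i-1}^i,\qquad \delta X_{i-1}^i:=X(t_{i-1}^i)-X(t_{i-1}),
\]
so the vertical increment carries the full increment of $X$ rather than that of $W$.

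The decisive step is the limit of $A_{2,2}^n$, which is where the factor $\varphi$ enters. I would substitute $\delta X_{i-1}^i=\int_{t_{i-1}}^{t_{i-1}^i}\psi(s)\,ds+\int_{t_{i-1}}^{t_{i-1}^i}\varphi(s)\,dW(s)$ and replace it by $\psi(t_{i-1})\,\delta t_{i-1}^i+\varphi(t_{i-1})\,\delta W_{i-1}^i$. The drift part contributes a term of the type $\delta t_{i-1}^i\,\delta W_{i-1}^i$, which vanishes just as $A_{2,3}^n$ does, while the diffusion part yields $\Delta_x F(Z_{t_{i-1}^i}^{h_i})\,\varphi(t_{i-1})\,(\delta W_{i-1}^i)^2$. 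Since $\mathbb E[(\delta W_{i-1}^i)^2]=\delta t_{i-1}^i=\frac{t_i-t_{i-1}}{2}$, the usual quadratic-variation argument gives $\sum_i \Delta_x F(X_{t_{i-1}})\varphi(t_{i-1})(\delta W_{i-1}^i)^2\to\frac12\int_0^T\Delta_x F(X_t)\varphi(t)\,dt$. Adding the limits of $A_1^n+A_3^n$ and of $A_2^n$ then produces the asserted identity.

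I expect the main obstacle to lie in making the treatment of $A_{2,2}^n$ rigorous: one must show that replacing $\delta X_{i-1}^i$ by $\psi(t_{i-1})\delta t_{i-1}^i+\varphi(t_{i-1})\delta W_{i-1}^i$ and $Z_{t_{i-1}^i}^{h_i}$ by $X_{t_{i-1}}$ costs only vanishing error terms, and that the resulting weighted quadratic-variation sum converges even though $\psi$ and $\varphi$ are only square-integrable and progressively measurable. This rests on the $\Lambda$-continuity of $\Delta_x F$ together with a dominated-convergence/uniform-integrability estimate; the remaining pieces are a direct transcription of the Brownian-motion computation preceding the statement.
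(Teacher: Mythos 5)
Your proposal is correct and follows essentially the same route as the paper: the paper gives no separate argument for Proposition \ref{43} beyond the remark that it follows ``from the above discussion,'' i.e.\ from the decomposition $A_1^n+A_2^n+A_3^n$ of (\ref{33}) and the expansion (\ref{42}), and your write-up is exactly that adaptation, correctly locating the appearance of $\varphi$ in the vertical term $\Delta_x F(\cdot)\,\delta X_{i-1}^i\,\delta W_{i-1}^i$ and the factor $\tfrac12$ in the half-interval quadratic variation $\sum_i(\delta W_{i-1}^i)^2\to T/2$. If anything, you make explicit the substitution $\delta X_{i-1}^i\approx\psi(t_{i-1})\delta t_{i-1}^i+\varphi(t_{i-1})\delta W_{i-1}^i$ and the attendant error estimates, which the paper leaves implicit.
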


As a consequence of the above proposition, recalling Theorem \ref{20}, it is easy to obtain the following functional It\^{o}-Stratonovich formula.

\begin{theorem}
Suppose $F\in \mathbb{C}^{1,2}(\Lambda)$, and $X$ is a stochastic process of the form
 \begin{equation*}
   X(t)=X(0)+\int_0^t \psi(s) ds + \int_0^t \varphi(s) \circ dW(s),
 \end{equation*}
where $X(0)$ is a constant, $\psi$ and $\varphi$ are two progressively
measurable processes satisfying
$$ \mathbb{E} \left[\int_0^T |\psi(t)|^{2} dt + \int_0^T |\varphi(t)|^{2} dt \right] < \infty,$$
then
  \begin{equation*}\label{}
   \begin{split}
    F(X_{t})=& \ F(X_{0})+\int_0^t \Delta_{s}F(X_{s}) ds
     +\int_0^t \Delta_{x}F(X_{s}) \psi(s) ds + \int_0^t \Delta_{x}F(X_{s})\varphi(s)\circ dW(s)\\
            =& \ F(X_{0})+\int_0^t \Delta_{s}F(X_{s}) ds+\int_0^t \Delta_{x}F(X_{s})\circ dX(s).
   \end{split}
  \end{equation*}
\end{theorem}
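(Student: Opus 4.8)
The plan is to reduce the assertion to the It\^{o}-type functional formula already in hand, Theorem \ref{20}, by rewriting every Stratonovich integral as an It\^{o} integral through the correction identity of Proposition \ref{43}. The second equality in the statement is only a matter of notation, since $dX(s)=\psi(s)\,ds+\varphi(s)\circ dW(s)$ forces
\begin{equation*}
\int_0^t\Delta_{x}F(X_{s})\circ dX(s)=\int_0^t\Delta_{x}F(X_{s})\psi(s)\,ds+\int_0^t\Delta_{x}F(X_{s})\varphi(s)\circ dW(s),
\end{equation*}
so the entire content is the first equality and I would concentrate on it.

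First I would pass from the Stratonovich description of $X$ to an It\^{o} description of the form \eqref{23}: writing $\int_0^t\varphi(s)\circ dW(s)=\int_0^t\varphi(s)\,dW(s)+\tfrac12\langle\varphi,W\rangle_{t}$ exhibits $X$ as an It\^{o} process with the same diffusion coefficient $\varphi$ and drift $\psi+c$, where $c$ is the It\^{o}-Stratonovich correction arising from the cross-variation $\langle\varphi,W\rangle$. Applying Theorem \ref{20}, that is, formula \eqref{24}, to this It\^{o} process yields
\begin{equation*}
F(X_{t})=F(X_{0})+\int_0^t\Delta_{s}F\,ds+\int_0^t\Delta_{x}F\,(\psi+c)\,ds+\int_0^t\Delta_{x}F\,\varphi\,dW+\tfrac12\int_0^t\Delta_{xx}F\,\varphi^{2}\,ds.
\end{equation*}

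It then remains to turn the It\^{o} stochastic integral back into Stratonovich form. Applying the It\^{o}-Stratonovich relation in the form appropriate to integration against $dX$, the natural extension of Proposition \ref{43} obtained by the same discretization as in \eqref{33}--\eqref{42}, to the functional $\Delta_{x}F$ (which lies in $\mathbb{C}^{1,1}(\Lambda)$ whenever $F\in\mathbb{C}^{1,2}(\Lambda)$) produces
\begin{equation*}
\int_0^t\Delta_{x}F\,\varphi\circ dW=\int_0^t\Delta_{x}F\,\varphi\,dW+\tfrac12\int_0^t\Delta_{xx}F\,\varphi^{2}\,ds+\int_0^t\Delta_{x}F\,c\,ds,
\end{equation*}
in which the last term reproduces exactly the drift correction $\int_0^t\Delta_{x}F\,c\,ds$. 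Substituting this into the displayed expansion and cancelling the two copies of $\tfrac12\int\Delta_{xx}F\,\varphi^{2}\,ds$ and the two copies of $\int\Delta_{x}F\,c\,ds$ leaves precisely $F(X_{0})+\int_0^t\Delta_{s}F\,ds+\int_0^t\Delta_{x}F\,\psi\,ds+\int_0^t\Delta_{x}F\,\varphi\circ dW$, which is the claim.

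I expect the main obstacle to be the bookkeeping of the two It\^{o}-Stratonovich corrections and the proof that they cancel: one must verify that the correction entering the diffusion-to-drift passage for $X$ is the very same $\int_0^t\Delta_{x}F\,c\,ds$ that surfaces when the stochastic integral is turned back into a Stratonovich integral, and this hinges on a Leibniz rule for the cross-variation $\langle\Delta_{x}F(X)\varphi,W\rangle$. A secondary technical point is justifying these manipulations under the stated low regularity of $\varphi$ (merely progressively measurable and square-integrable) and confirming that $\Delta_{x}F\in\mathbb{C}^{1,1}(\Lambda)$, that is, that the horizontal derivative of $\Delta_{x}F$ exists and is $\Lambda$-continuous, so that Proposition \ref{43} is genuinely applicable.
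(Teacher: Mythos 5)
Your proposal is correct and follows essentially the route the paper intends: the paper offers no written proof, simply declaring the theorem a consequence of Proposition \ref{43} (the It\^{o}--Stratonovich conversion) combined with Theorem \ref{20}, which is exactly the reduction you carry out. If anything, your version is more careful than the paper, since you explicitly track the cross-variation correction $c$ coming from $\langle\varphi,W\rangle$ and verify that it cancels, a point the paper silently ignores.
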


\begin{remark}
One similar (but different) functional It\^{o}-Stratonovich formula for classical Bromtion motion is presented in  Buckdahn et al. \cite{Buckdahn},
where they consider the ``path-derivatives''.
\end{remark}

\subsection{The case of fractional Brownian motion}

In this subsection, we study the Stratonovich type integral with respect to the fractional Brownian motion.
The classical Stratonovich integral driven by fractional Brownian motion has been studied by Al\`{o}s and Nualart \cite{Nualart1},
where it is equivalent to that of Russo-Vallois integral \cite{Russo}.

Similar to the discussion of Subsection \ref{30},
it's easy to obtain the following relation between Stratonovich integral and It\^{o} integral
when they are both driven by the fBm.

\begin{proposition}\label{45}
Let $F\in \mathbb{C}^{1,1}(\Lambda)$ and $\{X(t), t\geq 0\}$ be a stochastic process of the form (\ref{31}) below,
then
\begin{equation*}
    \int_0^T F(X_{t}) \circ dB^{H}(t) = \int_0^T F(X_{t}) dB^{H}(t).
\end{equation*}
\end{proposition}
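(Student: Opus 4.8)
The plan is to reproduce almost verbatim the midpoint-versus-left-endpoint computation of Subsection \ref{30} with the classical Brownian motion $W$ replaced by $B^{H}$, and then to isolate the single place where the Hurst parameter intervenes. Keeping the notation $t_{i-1}^{i}=\frac{t_{i-1}+t_{i}}{2}$, $\delta t_{i-1}^{i}=t_{i-1}^{i}-t_{i-1}$, and writing $\delta B_{i-1}^{i}=B^{H}(t_{i-1}^{i})-B^{H}(t_{i-1})$, $\delta X_{i-1}^{i}=X(t_{i-1}^{i})-X(t_{i-1})$, I would split the Stratonovich Riemann sum exactly as in (\ref{33}) into $A_{1}^{n}+A_{2}^{n}+A_{3}^{n}$. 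The sum $A_{1}^{n}+A_{3}^{n}$ only regroups the left-endpoint evaluations and converges to the functional It\^{o} integral $\int_{0}^{T}F(X_{t})\,dB^{H}(t)$, independently of $H$. Since both integrals are defined as $L^{2}$-limits of these sums, it suffices to prove $A_{2}^{n}\to 0$ in probability to obtain the claimed identity.

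The entire departure from the classical case is thus concentrated in $A_{2}^{n}$. Invoking $F\in\mathbb{C}^{1,1}(\Lambda)$ and the same telescoping through the auxiliary paths $Y,Z$ followed by the mean-value expansion as in (\ref{42}), I would write $A_{2}^{n}=A_{2,1}^{n}+A_{2,2}^{n}+A_{2,3}^{n}$. Here $A_{2,1}^{n}\to 0$ by $\Lambda$-continuity of $F$, while $A_{2,3}^{n}=\sum_{i}\Delta_{t}F(\cdot)\,\delta t_{i-1}^{i}\,\delta B_{i-1}^{i}$ is dominated by $(\max_{i}\delta t_{i-1}^{i})\sum_{i}|\delta B_{i-1}^{i}|$ and vanishes in $L^{1}$, because $\mathbb{E}|\delta B_{i-1}^{i}|$ is of order $(\delta t_{i-1}^{i})^{H}$. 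The decisive term is
\begin{equation*}
A_{2,2}^{n}=\sum_{i=1}^{n}\Delta_{x}F(Z^{h_{i}}_{t_{i-1}^{i}})\,\delta X_{i-1}^{i}\,\delta B_{i-1}^{i}.
\end{equation*}
Splitting $\delta X_{i-1}^{i}$, via the representation (\ref{31}) of $X$, into its drift and diffusion increments, the drift part contributes a term of the same vanishing type as $A_{2,3}^{n}$, whereas the diffusion part is controlled by a constant multiple of the squared-increment sum $\sum_{i}(\delta B_{i-1}^{i})^{2}$. This is precisely the quantity that, for classical Brownian motion, reassembles the quadratic variation and produces the It\^{o}--Stratonovich correction $\tfrac12\int_{0}^{T}\Delta_{x}F(X_{t})\,\varphi(t)\,dt$.

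The one new input is that for $H>\frac12$ these squared increments sum to zero. From $\mathbb{E}[(\delta B_{i-1}^{i})^{2}]=(\delta t_{i-1}^{i})^{2H}$ I get
\begin{equation*}
\mathbb{E}\Big[\sum_{i=1}^{n}(\delta B_{i-1}^{i})^{2}\Big]=\sum_{i=1}^{n}(\delta t_{i-1}^{i})^{2H}\leq\Big(\max_{i}\delta t_{i-1}^{i}\Big)^{2H-1}\sum_{i=1}^{n}\delta t_{i-1}^{i}\longrightarrow 0,
\end{equation*}
since $2H-1>0$ and the mesh tends to $0$; this is exactly the vanishing of the quadratic variation of $B^{H}$ in the regime $H>\frac12$. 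As the approximating paths $Z^{h_{i}}_{t_{i-1}^{i}}$ stay in a shrinking $d_{\infty}$-neighbourhood of the continuous trajectory $X$, on which the $\Lambda$-continuous functional $\Delta_{x}F$ is bounded, the above yields $A_{2,2}^{n}\to 0$ in $L^{1}$. Collecting the three limits gives $A_{2}^{n}\to 0$, whence $\int_{0}^{T}F(X_{t})\circ dB^{H}(t)=\int_{0}^{T}F(X_{t})\,dB^{H}(t)$, with no correction term.

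I expect the only genuine obstacle to be the rigorous treatment of $A_{2,2}^{n}$: one must secure a uniform bound on $\Delta_{x}F$ (and on the diffusion coefficient of $X$, by a localization if it is not a priori bounded) along the random approximating paths, which is provided by $\Lambda$-continuity together with the continuity of the trajectories of $X$ on the compact interval $[0,T]$, and one must pass cleanly from the cross product $\delta X_{i-1}^{i}\,\delta B_{i-1}^{i}$ to the squared-increment sum through the representation (\ref{31}). Everything else is a faithful transcription of the argument of Subsection \ref{30}; the essential and only structural difference is that the quadratic variation of the driving fractional Brownian motion vanishes when $H>\frac12$, which is why the It\^{o}--Stratonovich correction present in the classical case disappears.
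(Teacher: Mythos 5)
Your proposal is correct and follows essentially the same route the paper intends: the paper itself gives no detailed proof of Proposition \ref{45}, merely asserting it is ``similar to the discussion of Subsection \ref{30},'' and the argument it has in mind is exactly your transcription of the decomposition $A_{1}^{n}+A_{2}^{n}+A_{3}^{n}$ with the correction term killed by the vanishing quadratic variation $\sum_{i}(\delta B_{i-1}^{i})^{2}\to 0$ for $H>\frac{1}{2}$ (the same mechanism the paper uses for the term $A_{4}^{n}$ in the proof of Theorem \ref{32}). Your write-up is in fact more explicit than the paper's on the treatment of $A_{2,2}^{n}$ and $A_{2,3}^{n}$.
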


\begin{remark}\label{35}
The above proposition implies that for the fractional Brownian motion,
the Stratonovich type integral (\ref{41}) is equivalent to the It\^{o} type integral  (\ref{21}).
In fact, similar to the classical case, the Stratonovich type integral (\ref{41}) with respect to the fractional Brownian motion
is also equivalent to the Russo-Vallois integral.
\end{remark}

In the following, when consider the fractional Brownian motion,
we don't distinguish Stratonovich type integral and It\^{o} type integral since they are equivalent,
and we continue to use the notation of Stratonovich integral in order to keep uniformity.
Next, we give a functional It\^{o} formula for the fractional Brownian motion.

Let $\{X(t),0\leq t\leq T\}$ be a stochastic process of the form
\begin{equation}\label{31}
     X(t)=X(0)+\int_0^t \psi(s) ds + \int_0^t \varphi(s) \circ dB^{H}(s),
\end{equation}
where $X(0)$ is a constant, $\psi(t,\omega)$ and $\varphi(t,\omega):[0,T]\times \Omega \rightarrow \mathbb{R}$
are two progressively measurable processes with $\varphi$ is Stratonovich integrable, such that
\begin{equation*}
  \mathbb{E} \left[\int_0^T |\psi(t)|^{2} dt + \int_0^T |\varphi(t)|^{2} dt \right] < \infty.
\end{equation*}

\begin{theorem}\label{32}
Let $\{X(t),0\leq t\leq T\}$ be defined in (\ref{31}). If $F\in \mathbb{C}^{1,2}(\Lambda)$, then for any $T\geq 0$,
  \begin{equation}\label{34}
    F(X_{T})=F(X_{0})+\int_0^T \Delta_{t}F(X_{t}) dt
     + \int_0^T \Delta_{x}F(X_{t}) \psi(t) dt + \int_0^T \Delta_{x}F(X_{t})\varphi(t) \circ  dB^{H}(t).
  \end{equation}
\end{theorem}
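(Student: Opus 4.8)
The plan is to follow the Dupire--Cont--Fourni\'e telescoping scheme adapted to the fractional setting, the essential new feature being that the driving noise has \emph{zero} quadratic variation when $H>\frac12$. Fix a partition $\pi_{n}:0=t_{0}<t_{1}<\cdots<t_{n}=T$ with mesh $|\pi_{n}|\to0$, and let $X^{n}$ denote the piecewise-constant approximation of the path, $X^{n}(s)=X(t_{i-1})$ for $s\in[t_{i-1},t_{i})$ and $X^{n}(t_{i})=X(t_{i})$. Since $X$ is a.s.\ continuous, $d_{\infty}(X^{n}_{T},X_{T})\to0$, so by $\Lambda$-continuity $F(X^{n}_{T})\to F(X_{T})$, and it suffices to analyse the telescoping sum $F(X^{n}_{T})-F(X_{0})=\sum_{i=1}^{n}\big[F(X^{n}_{t_{i}})-F(X^{n}_{t_{i-1}})\big]$. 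For a piecewise-constant path each increment splits \emph{exactly} into a horizontal and a vertical move: writing $h_{i}:=X(t_{i})-X(t_{i-1})$ and using the notation $X_{t_{i-1},t_{i}}$ and $(\cdot)^{h_{i}}$ of Section~2, one checks $X^{n}_{t_{i}}=(X^{n}_{t_{i-1},t_{i}})^{h_{i}}$, whence
\begin{equation*}
 F(X^{n}_{t_{i}})-F(X^{n}_{t_{i-1}})=\underbrace{\big[F(X^{n}_{t_{i-1},t_{i}})-F(X^{n}_{t_{i-1}})\big]}_{\text{horizontal}}+\underbrace{\big[F((X^{n}_{t_{i-1},t_{i}})^{h_{i}})-F(X^{n}_{t_{i-1},t_{i}})\big]}_{\text{vertical}}.
\end{equation*}

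Next I would treat the two pieces. For the horizontal term, the map $s\mapsto F(X^{n}_{t_{i-1},s})$ has right-derivative $\Delta_{s}F(X^{n}_{t_{i-1},s})$ by the very definition of the horizontal derivative (extending the constant path from $s$ to $s+h$ is precisely the horizontal perturbation), so the fundamental theorem of calculus gives $F(X^{n}_{t_{i-1},t_{i}})-F(X^{n}_{t_{i-1}})=\int_{t_{i-1}}^{t_{i}}\Delta_{s}F(X^{n}_{t_{i-1},s})\,ds$; summing and letting $n\to\infty$, the $\Lambda$-continuity of $\Delta_{t}F$ together with $d_{\infty}(X^{n}_{t_{i-1},s},X_{s})\to0$ yields $\int_{0}^{T}\Delta_{t}F(X_{t})\,dt$. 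For the vertical term I would Taylor-expand the scalar map $h\mapsto F((X^{n}_{t_{i-1},t_{i}})^{h})$, which is $C^{2}$ because $F\in\mathbb{C}^{1,2}(\Lambda)$. The first-order contribution $\sum_{i}\Delta_{x}F(X^{n}_{t_{i-1},t_{i}})\,h_{i}$ is a non-anticipating Riemann--Stieltjes sum converging to the forward integral $\int_{0}^{T}\Delta_{x}F(X_{t})\,dX(t)=\int_{0}^{T}\Delta_{x}F(X_{t})\psi(t)\,dt+\int_{0}^{T}\Delta_{x}F(X_{t})\varphi(t)\,dB^{H}(t)$; by Proposition~\ref{45} the last integral equals the Stratonovich integral, producing exactly the last two terms of (\ref{34}).

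The heart of the matter, and the step I expect to be the main obstacle, is the quadratic term. Using the Lagrange remainder, the vertical piece equals $\Delta_{x}F(X^{n}_{t_{i-1},t_{i}})h_{i}+\tfrac12\Delta_{xx}F\big((X^{n}_{t_{i-1},t_{i}})^{\theta_{i}h_{i}}\big)h_{i}^{2}$ for some $\theta_{i}\in(0,1)$, so the second-order sum is bounded in modulus by $\tfrac12\big(\sup_{i}|\Delta_{xx}F|\big)\sum_{i}\big(X(t_{i})-X(t_{i-1})\big)^{2}$. This is precisely where $H>\frac12$ is decisive: the fractional Brownian motion has infinite total variation, so a first-order expansion alone cannot work since $\sum_{i}|h_{i}|$ does not vanish, but it has \emph{zero} quadratic variation, as $\mathbb{E}\sum_{i}(B^{H}(t_{i})-B^{H}(t_{i-1}))^{2}=\sum_{i}(t_{i}-t_{i-1})^{2H}\le T\,|\pi_{n}|^{2H-1}\to0$ because $2H-1>0$; the drift contributes only $O(|\pi_{n}|)$ and $\varphi$ is handled by boundedness, so $\sum_{i}(X(t_{i})-X(t_{i-1}))^{2}\to0$. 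Hence the entire second-order sum vanishes, which is exactly why no It\^o correction $\tfrac12\int_{0}^{T}\Delta_{xx}F\,\varphi^{2}\,dt$ appears in (\ref{34}), in contrast with the classical case of Theorem~\ref{20}. The technical care lies in controlling $\sup_{i}|\Delta_{xx}F|$ along the approximating paths: since $\mathbb{C}^{1,2}(\Lambda)$ guarantees only $\Lambda$-continuity, not boundedness, of $\Delta_{xx}F$, I would first localize by stopping times keeping $\|X\|$ bounded, prove (\ref{34}) for the stopped process using the uniform bound of $\Delta_{xx}F$ on the resulting compact set of paths, and then remove the localization by a standard limiting argument. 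Assembling the three limits completes the proof.
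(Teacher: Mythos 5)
Your proposal is correct and follows essentially the same route as the paper's own proof: a telescoping sum over a piecewise-constant approximation, a split of each increment into a horizontal part and a second-order Taylor expansion of the vertical part, with the second-order sum killed by the vanishing quadratic variation of $B^{H}$ for $H>\tfrac12$ and the first-order sums converging to the integrals in (\ref{34}). Your added localization to control $\sup_i|\Delta_{xx}F|$ is a welcome refinement of a step the paper passes over with ``ordinary dominated convergence,'' but it does not change the argument.
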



\begin{proof}
We take a sequence of nested subdivisions of $[0,T], \ \pi: 0=t_{0}< t_{1}<...<t_{n}=T$.
Define
\begin{equation*}
  Y_{t_{i}}(s)=\sum_{j=1}^{i}X(t_{j-1})I_{[t_{j-1},t_{j})}(s) + X(t_{i})I_{\{t_{i}\}}, \ \
  Z_{t_{i}}(s)= \sum_{j=1}^{i}X(t_{j-1})I_{[t_{j-1},t_{j})}(s)+ X(t_{i-1})I_{\{t_{i}\}}.
\end{equation*}
Moreover, we denote
$$\delta t_{i}=t_{i}-t_{i-1}, \ \ \ \delta B^{H}_{i}=B^{H}(t_{i}) - B^{H}(t_{i-1}), \ \ \ \delta X_{i}=X(t_{i})-X(t_{i-1}).$$
Since $\psi(t)$ and $\varphi(t)$ are elementary functions,
we may wish to set $\psi(t)=\sum\limits_{i}\psi(t_{i-1})I_{[t_{i-1},t_{i})}$ and $\varphi(t)=\sum\limits_{i}\varphi(t_{i-1})I_{[t_{i-1},t_{i})}$, then
\begin{equation*}
  \delta X_{i} = \int_{t_{i-1}}^{t_{i}} \psi(s) ds + \int_{t_{i-1}}^{t_{i}} \varphi(s) dB^{H}(s)
               = \psi(t_{i-1})\delta t_{i} + \varphi(t_{i-1}) \delta B^{H}_{i}.
\end{equation*}
Notice that $Y_{0}=Z_{0}=X_{0}$, then
\begin{equation*}
  F(X_{T}) - F(X_{0}) = F(X_{T}) - F(Y_{t_{n}}) + \sum_{i=1}^{n} (F(Y_{t_{i}}) - F(Y_{t_{i-1}})).
\end{equation*}
Since $F\in \mathbb{C}^{1,2}(\Lambda)$, from the Taylor's theorem,
we get the existence of $h_{i}\in (0,\delta X_{i})$ and $y_{i}\in (0,\delta t_{i})$ such that,
\begin{equation*}
  \begin{split}
         F(Y_{t_{i}}) - F(Y_{t_{i-1}})
      =& \ F(Y_{t_{i}}) - F(Z_{t_{i}}) + F(Z_{t_{i}}) - F(Y_{t_{i-1}})\\
      =& \ \Delta_{x}F(Z_{t_{i}})\delta X_{i} +
       \frac{1}{2}\Delta_{xx}F(Z_{t_{i}}^{h_{i}})(\delta X_{i})^{2} + \Delta_{t}F(Y_{t_{i-1},y_{i}})\delta t_{i}.
  \end{split}
\end{equation*}
Hence, we get
\begin{equation*}
  F(X_{T}) - F(X_{0}) = A_{1}^{n} + A_{2}^{n} + A_{3}^{n} + \frac{1}{2}A_{4}^{n} + A_{5}^{n},
\end{equation*}
where
\[\begin{split}
  A_{1}^{n}& = F(X_{T}) - F(Y_{t_{n}}), \\
  A_{2}^{n}&  = \sum_{i=1}^{n} \Delta_{x}F(Z_{t_{i}})\psi(t_{i-1})\delta t_{i}, \\
  A_{3}^{n}& = \sum_{i=1}^{n} \Delta_{x}F(Z_{t_{i}})\varphi(t_{i-1}) \delta B^{H}_{i}, \\
  A_{4}^{n}&  = \sum_{i=1}^{n} \Delta_{xx}F(Z_{t_{i}}^{h_{i}})(\delta X_{i})^{2}, \\
  A_{5}^{n}& = \sum_{i=1}^{n}  \Delta_{t}F(Y_{t_{i-1},y_{i}})\delta t_{i}.
\end{split}\]
For term $A_{1}^{n}$: Since $X(t)$ is continuous on interval $[0,T]$, hence uniformly continuous,
and $Y_{t_{n}}$ converges uniformly (in the $\Lambda$-distance) to $X_{T}$.
By $\Lambda$-continuity of $F$, we have $A_{1}^{n}\xrightarrow[]{n} 0$.

Similarly, for terms $A_{2}^{n}, \ A_{3}^{n}$ and $A_{5}^{n}$: When $t\in[t_{i-1},t_{i})$, it is easy to know
\begin{equation*}
\Delta_{x}F(Z_{t_{i}}) \stackrel{n}{\longrightarrow} \Delta_{x}F(X_{t}); \ \ \
\Delta_{t}F(Y_{t_{i-1},y_{i}})\stackrel{n}{\longrightarrow} \Delta_{t}F(X_{t}).
\end{equation*}
Hence, by ordinary dominated convergence,
\begin{equation*}
\begin{split}
   A_{2}^{n} &= \sum_{i=1}^{n} \Delta_{x}F(Z_{t_{i}})\psi(t_{i-1})\delta t_{i} \stackrel{n}{\longrightarrow} \int_0^T \Delta_{x}F(X_{t}) \psi(t) dt,\\
   A_{3}^{n} &= \sum_{i=1}^{n} \Delta_{x}F(Z_{t_{i}})\varphi(t_{i-1}) \delta B^{H}_{i} \stackrel{n}{\longrightarrow}
    \int_0^T \Delta_{x}F(X_{t})\varphi(t) dB^{H}(t),\\
   A_{5}^{n} &= \sum_{i=1}^{n} \Delta_{t}F(Y_{t_{i-1},y_{i}})\delta t_{i} \stackrel{n}{\longrightarrow} \int_0^T \Delta_{t}F(X_{t}) dt.
\end{split}
\end{equation*}
Next, we consider the term $A_{4}^{n}$,
\begin{equation*}
\begin{split}
       A_{4}^{n}
     =&  \sum_{i=1}^{n} \Delta_{xx}F(Z_{t_{i}}^{h_{i}})\psi(t_{i-1})^{2}(\delta t_{i})^{2}
       +2\sum_{i=1}^{n} \Delta_{xx}F(Z_{t_{i}}^{h_{i}})\psi(t_{i-1})\varphi(t_{i-1})\delta t_{i} \delta B^{H}_{i}\\
      &+ \sum_{i=1}^{n} \Delta_{xx}F(Z_{t_{i}}^{h_{i}})\varphi(t_{i-1})^{2} (\delta B^{H}_{i})^{2}\\
     =&: A_{4,1}^{n}+A_{4,2}^{n}+A_{4,3}^{n} .
\end{split}
\end{equation*}
It's easy to see that $ A_{4,1}^{n}\xrightarrow{n} 0$.
Since $\frac{1}{2}< H<1$, the quadratic variation of the fractional Brownian motion is zero. Hence
$A_{4,2}^{n}$ and $A_{4,3}^{n}$ also converge to 0 as $n\rightarrow \infty$.
Therefore, $ A_{4}^{n}\xrightarrow{n} 0$.
Our desired result is proved.
\end{proof}

\begin{remark}
The above proof is probabilistic and makes use of the proof of classical It\^{o} formula.
A nonprobabilistic proof of a general result, i.e., a functional It\^{o} formula with respect to a process with finite quadratic variation,
was established in Cont and Fourni\^{e} \cite{Cont10}.
Cosso and Russo \cite{Russo4} also obtained a functional It\^{o} formula with respect to a process with finite quadratic variation via regularization approach.
\end{remark}

\section{Wick-It\^{o}-Skorohod type integral}

In this section, we study the Wick-It\^{o}-Skorohod type integral with respect to fractional Brownian motion via Malliavin calculus approach.
It's noted that the Stratonovich type integral $\int_0^t F(X_{t}) \circ dB^{H}(t)$ does $\mathbf{not}$ satisfy the following property:
$$\mathbb{E}\int_0^t F(X_{t})\circ  dB^{H}(t)=0.$$
In this section, we study a new type of stochastic integral $\int_0^t F(X_{t}) \diamond dB^{H}(t)$ satisfying
$$\mathbb{E}\int_0^t F(X_{t}) \diamond dB^{H}(t)=0.$$
In particular, we consider the simple case, i.e., the integrand $F=F(B^{H}_{t})$.
For general case of the integrand, some further studies will be given in the coming future researches.


\begin{definition}
Let $F:\Lambda \rightarrow \mathbb{R}$ be $\Lambda$-continuous  and vertical differentiable in $\Lambda$.
For any fixed $t\in [0,T]$, the Malliavin derivative of $F=F(B^{H}_{t})$ is defined as:
\begin{equation}\label{51}
  D^{H}_{s}F= \Delta_{x} F(B^{H}_{t}) I_{[0,t]}(s), \ \ 0\leq s\leq T.
\end{equation}
\end{definition}


By using a similar method, we can check that the special Malliavin derivative $D^{H}F$ defined in (\ref{51}) satisfies (\ref{28}),
and the Wick-It\^{o}-Skorohod type stochastic integral, similar to (\ref{29}), is defined as
\begin{equation}\label{52}
\int_0^T F(B^{H}_{t}) \diamond dB^{H}(t):= \lim_{n \rightarrow 0} \sum_{i=1}^{n} F(B^{H}_{t_{i-1}}) \diamond (B^{H}(t_{i})-B^{H}(t_{i-1}))
\end{equation}
in the sense of $L^{2}(\Omega,\mathcal{F}, P)$,
where $0=t_{0}\leq t_{1}\leq ... \leq t_{n}=T$ is the partition of the interval $[0.T]$.
It satisfies the property $\mathbb{E} \big(\int_0^T F(B^{H}_{t}) \diamond dB^{H}(t) \big) = 0$.
In fact,
\begin{equation*}
\begin{split}
  &\mathbb{E} \left( \sum_{i=1}^{n} F(B^{H}_{t_{i-1}}) \diamond (B^{H}(t_{i})-B^{H}(t_{i-1})) \right)\\
 =&\sum_{i=1}^{n} \mathbb{E} \bigg[ F(B^{H}_{t_{i-1}}) \diamond \big(B^{H}(t_{i})-B^{H}(t_{i-1})\big) \bigg]\\
 =&\sum_{i=1}^{n} \mathbb{E} \big[ F(B^{H}_{t_{i-1}})\big] \mathbb{E} \big[ B^{H}(t_{i})-B^{H}(t_{i-1}) \big]
 =0.
\end{split}
\end{equation*}
For the integral (\ref{52}), it owns a similar result to  Proposition \ref{26}.
Since the proof is identical to the proof of  Proposition \ref{26}, we omit the details and only state the main result for simplicity of presentation.

\begin{theorem}
If $F=F(B^{H}_{t})$ is $\Lambda$-continuous  such that
\begin{equation}\label{57}
  \mathbb{E} \big[\big(\int_0^T D^{\phi}_{s}F(B^{H}_{s}) ds\big)^{2} + \int_0^T \int_0^T \phi(u-v) F(B^{H}_{u}) F(B^{H}_{v}) dudv \big] < \infty,
\end{equation}
then the integral (\ref{52}) exists in $L^{2}(\Omega,\mathcal{F}, P)$.
Moreover,
$$\mathbb{E} \int_0^T F(B^{H}_{s}) \diamond d B^{H}(s) = 0,$$
and
\begin{equation*}
\mathbb{E} | \int_0^T F(B^{H}_{s}) \diamond d B^{H}(s) |^{2}
  = \mathbb{E} \big[\big(\int_0^T D^{\phi}_{s}F(B^{H}_{s}) ds\big)^{2}
   + \int_0^T \int_0^T \phi(u-v) F(B^{H}_{u}) F(B^{H}_{v}) dudv \big].
\end{equation*}
\end{theorem}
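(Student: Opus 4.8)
The plan is to reduce the statement to Proposition \ref{26}, whose proof is already available for continuous integrands, and then, if one wants a self-contained argument, to repeat that proof with the fBm-path functional in place of the abstract process. Set $G(s):=F(B^{H}_{s})$ for $s\in[0,T]$. The $\Lambda$-continuity of $F$ together with the almost sure continuity of the path $s\mapsto B^{H}_{s}$ in the $d_{\infty}$-distance shows that $G$ is a continuous stochastic process. By the definition \eqref{51} of the special Malliavin derivative, $D^{H}_{r}G(s)=\Delta_{x}F(B^{H}_{s})I_{[0,s]}(r)$, so that
\begin{equation*}
  \|G\|_{T}^{2}=\int_0^T\int_0^T \phi(u-v)F(B^{H}_{u})F(B^{H}_{v})\,du\,dv,\qquad \int_0^T D^{\phi}_{s}G(s)\,ds=\int_0^T D^{\phi}_{s}F(B^{H}_{s})\,ds.
\end{equation*}
Consequently the hypothesis $\mathbb{E}[\|G\|_{T}^{2}+(\int_0^T D^{\phi}_{s}G(s)\,ds)^{2}]<\infty$ of Proposition \ref{26} is exactly the integrability condition \eqref{57}, and Proposition \ref{26} applies verbatim to yield the existence of the integral \eqref{52}, the vanishing of its mean, and the second-moment identity.

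For the self-contained version I would mirror the proof of Proposition \ref{26} step by step. The first step is to invoke the Wick identity \eqref{28}, which holds for the special derivative \eqref{51} as noted just before the statement. Taking $g=I_{[t_{i-1},t_{i}]}$ so that $\int_0^T g(t)\,dB^{H}(t)=B^{H}(t_{i})-B^{H}(t_{i-1})$, one rewrites each summand of \eqref{52} as
\begin{equation*}
  F(B^{H}_{t_{i-1}})\diamond\big(B^{H}(t_{i})-B^{H}(t_{i-1})\big)=F(B^{H}_{t_{i-1}})\big(B^{H}(t_{i})-B^{H}(t_{i-1})\big)-\big\langle D^{H}F(B^{H}_{t_{i-1}}),I_{[t_{i-1},t_{i}]}\big\rangle_{T}.
\end{equation*}
Summing over $i$ splits the partial sum $S_{n}:=\sum_{i=1}^{n}F(B^{H}_{t_{i-1}})\diamond(B^{H}(t_{i})-B^{H}(t_{i-1}))$ into an ordinary Riemann sum against $dB^{H}$ minus a sum of inner products against the kernel $\phi$.

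The second step is the isometry computation, which I expect to be the main obstacle. I would compute $\mathbb{E}[S_{n}S_{m}]$ along nested partitions and show it converges to the right-hand side of the claimed identity. Because the Wick product of $F(B^{H}_{t_{i-1}})$ with a first-chaos increment raises the order in the Wiener chaos, the contributions of distinct increments interact only through the covariance kernel $\phi$; assembling them produces precisely two limiting terms, namely $\mathbb{E}\int_0^T\int_0^T \phi(u-v)F(B^{H}_{u})F(B^{H}_{v})\,du\,dv$ from the diagonal part and $\mathbb{E}(\int_0^T D^{\phi}_{s}F(B^{H}_{s})\,ds)^{2}$ from the trace part. The delicate point is that, since $H>\frac{1}{2}$, the kernel $\phi$ is non-local, so these partition sums range over a genuine double integral rather than a diagonal; controlling their convergence requires the continuity of $s\mapsto F(B^{H}_{s})$ and of $s\mapsto\Delta_{x}F(B^{H}_{s})$ together with the uniform integrability supplied by \eqref{57}, and it is exactly this non-locality that generates the extra $D^{\phi}$ contribution absent in the classical $H=\frac{1}{2}$ case. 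Applying the same computation to $S_{n}-S_{m}$ shows that $\{S_{n}\}$ is Cauchy in $L^{2}(\Omega,\mathcal{F},P)$, so the limit \eqref{52} exists; passing to the limit in the second-moment identity gives the stated formula, while the mean-zero property follows from the elementary computation already displayed before the statement.
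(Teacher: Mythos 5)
Your proposal matches the paper's approach exactly: the paper simply states that the proof is identical to that of Proposition \ref{26} and omits all details, and your first paragraph carries out precisely that reduction, verifying that with $G(s)=F(B^{H}_{s})$ the hypothesis of Proposition \ref{26} coincides with condition \eqref{57} via the special derivative \eqref{51}. Your additional sketch of the Wick-identity decomposition and the isometry computation is consistent with (and more explicit than) what the paper provides.
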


The following proposition is a relation between Wick-It\^{o}-Skorohod integral and Stratonovich integral.
It was an extension of Proposition 3.3 in  Nualart \cite{Nualart} for the classical result.

\begin{proposition}\label{54}
Let $F:\Lambda \rightarrow \mathbb{R}$ be in $\mathbb{C}^{1,1}(\Lambda)$ such that $F=F(B^{H}_{t})$ satisfies (\ref{57}),
then
\begin{equation}\label{53}
\int_0^T F(B^{H}_{t}) \diamond dB^{H}(t) = \int_0^T F(B^{H}_{t}) \circ dB^{H}(t) - H \int_0^T \Delta_{x}F(B^{H}_{t})t^{2H-1} dt.
\end{equation}
\end{proposition}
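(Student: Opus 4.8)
The plan is to expand both integrals as limits of Riemann sums over a partition $0 = t_0 < t_1 < \cdots < t_n = T$ and to compare them term by term. By the definition (\ref{52}) of the Wick-It\^o-Skorohod integral together with the identity (\ref{28}) of Proposition \ref{25}, applied with the random variable $F(B^H_{t_{i-1}})$ and the deterministic integrand $g = I_{[t_{i-1},t_i]}$ (so that $\int_0^T g\,dB^H = B^H(t_i) - B^H(t_{i-1})$), each summand in (\ref{52}) satisfies
\begin{equation*}
  F(B^H_{t_{i-1}}) \diamond \big(B^H(t_i) - B^H(t_{i-1})\big)
  = F(B^H_{t_{i-1}})\big(B^H(t_i) - B^H(t_{i-1})\big) - \big\langle D^H F(B^H_{t_{i-1}}), I_{[t_{i-1},t_i]}\big\rangle_T.
\end{equation*}
Summing over $i$, the first group of terms converges, by the definition (\ref{21}), to the It\^o-type integral, which by Proposition \ref{45} and Remark \ref{35} coincides with the Stratonovich integral $\int_0^T F(B^H_t)\circ dB^H(t)$. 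It therefore remains to identify the limit of the correction sum $\sum_{i=1}^n \langle D^H F(B^H_{t_{i-1}}), I_{[t_{i-1},t_i]}\rangle_T$.

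Next I would evaluate each correction term using the special Malliavin derivative (\ref{51}), which gives $D^H_s F(B^H_{t_{i-1}}) = \Delta_x F(B^H_{t_{i-1}}) I_{[0,t_{i-1}]}(s)$. Inserting this into the scalar product yields
\begin{equation*}
  \big\langle D^H F(B^H_{t_{i-1}}), I_{[t_{i-1},t_i]}\big\rangle_T
  = \Delta_x F(B^H_{t_{i-1}}) \int_0^{t_{i-1}}\!\!\int_{t_{i-1}}^{t_i} \phi(u-v)\,dv\,du.
\end{equation*}
Since $u < t_{i-1} \le v$ on the domain of integration, one has $\phi(u-v) = H(2H-1)(v-u)^{2H-2}$, and an elementary double integration (integrating first in $v$, then in $u$) produces the closed form
\begin{equation*}
  \int_0^{t_{i-1}}\!\!\int_{t_{i-1}}^{t_i} \phi(u-v)\,dv\,du
  = \tfrac{1}{2}\big(t_i^{2H} - t_{i-1}^{2H} - (t_i - t_{i-1})^{2H}\big),
\end{equation*}
which one recognizes as the covariance $\mathbb{E}\big[B^H(t_{i-1})(B^H(t_i) - B^H(t_{i-1}))\big]$ arising from (\ref{1}).

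Finally I would split the correction sum accordingly. Writing $\delta t_i := t_i - t_{i-1}$, the $(t_i - t_{i-1})^{2H}$ contribution is negligible because $2H > 1$ forces $\sum_i (\delta t_i)^{2H} \le (\max_i \delta t_i)^{2H-1}\sum_i \delta t_i \to 0$; the remaining part $\tfrac{1}{2}\sum_i \Delta_x F(B^H_{t_{i-1}})(t_i^{2H} - t_{i-1}^{2H})$ is, via the mean value identity $t_i^{2H} - t_{i-1}^{2H} = 2H\,\xi_i^{2H-1}\delta t_i$, a Riemann sum converging to $H\int_0^T \Delta_x F(B^H_t)\, t^{2H-1}\,dt$. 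Combining the two limits gives (\ref{53}). The main obstacle is that all of this must be carried out as convergence \emph{in $L^2(\Omega,\mathcal{F},P)$} rather than pathwise: I expect the most delicate point to be controlling the correction sum in mean square, using the $\Lambda$-continuity of $\Delta_x F$ together with the moment bound (\ref{57}) to push the dominated-convergence argument through the Wick products, so that the term-by-term application of Proposition \ref{25} survives the passage to the limit.
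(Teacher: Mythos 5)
Your proposal is correct and follows essentially the same route as the paper's proof: expand the Wick--It\^{o}--Skorohod sum via Proposition \ref{25}, identify the first sum with the Stratonovich integral through Proposition \ref{45}, and evaluate the correction terms as $\tfrac{1}{2}\big(t_i^{2H}-t_{i-1}^{2H}-(t_i-t_{i-1})^{2H}\big)$, discarding the $(t_i-t_{i-1})^{2H}$ piece and passing to the limit by the mean value theorem. The only cosmetic difference is that you compute the inner product $\langle I_{[0,t_{i-1}]},I_{[t_{i-1},t_i]}\rangle_T$ by direct double integration of $\phi$ while the paper reads it off from the covariance (\ref{1}); your closing remark on carrying the argument through in $L^{2}$ is a point the paper leaves implicit.
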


\begin{proof}
Let $\pi:0=t_{0}\leq t_{1}\leq ... \leq t_{n}=T$ be a partition of the interval $[0.T]$.
The formula (\ref{28}) yields that
\begin{equation*}
\begin{split}
   &\sum_{i=1}^{n} F(B^{H}_{t_{i-1}}) \diamond (B^{H}(t_{i})-B^{H}(t_{i-1}))\\
  =&\sum_{i=1}^{n} F(B^{H}_{t_{i-1}}) (B^{H}(t_{i})-B^{H}(t_{i-1}))
   -\sum_{i=1}^{n} \langle D^{H}_{s}  F(B^{H}_{t_{i-1}}),I_{[t_{i-1},t_{i}]} \rangle_{T},\\
\end{split}
\end{equation*}
where $D^{H}_{s}  F(B^{H}_{t_{i-1}})= \Delta_{x}F(B^{H}_{t_{i-1}}) I_{[0,t_{i-1}]}(s)$ and notice Proposition \ref{45}. We have
\begin{equation*}
\begin{split}
   &\Delta_{x}F(B^{H}_{t_{i-1}}) \langle I_{[0,t_{i-1}]},I_{[t_{i-1},t_{i}]} \rangle_{T}\\
  =&\Delta_{x}F(B^{H}_{t_{i-1}})\big(\langle I_{[0,t_{i-1}]},I_{[0,t_{i}]} \rangle_{T} - \langle I_{[0,t_{i-1}]},I_{[0,t_{i-1}]} \rangle_{T}\big)\\
  =&\frac{1}{2} \Delta_{x}F(B^{H}_{t_{i-1}})\big[ t_{i}^{2H} - t_{i-1}^{2H} - (t_{i} - t_{i-1})^{2H} \big]\\
  =&\frac{1}{2} \Delta_{x}F(B^{H}_{t_{i-1}})\big[ 2H \hat{t}_{i-1}^{2H-1}(t_{i} - t_{i-1}) - (t_{i} - t_{i-1})^{2H}\big],
\end{split}
\end{equation*}
where $\hat{t}_{i-1}\in (t_{i-1},t_{i})$.
It is easy to know that $(t_{i} - t_{i-1})^{2H}=o(t_{i} - t_{i-1})\stackrel{n}{\rightarrow} 0$.
Then
\begin{equation*}
   \sum_{i=1}^{n} \langle D^{H}_{s}  F(B^{H}_{t_{i-1}}),I_{[t_{i-1},t_{i}]} \rangle_{T}
   \stackrel{n}{\longrightarrow}  H \int_0^T \Delta_{x}F(B^{H}_{t})t^{2H-1} dt.
\end{equation*}
 This completes the proof.
\end{proof}

\begin{remark}
 Formula (\ref{53}) leads to the following equation for the expectation of the integral (\ref{21}) with respect to fBm:
\begin{equation*}
    \mathbb{E} \int_0^T F(B^{H}_{t}) \circ dB^{H}(t) = H \int_0^T \mathbb{E}(\Delta_{x}F(B^{H}_{t}))t^{2H-1} dt.
\end{equation*}

\end{remark}

From Theorem \ref{32} and Proposition \ref{54}, we directly obtain the following functional It\^{o} formula for Wick-It\^{o}-Skorohod integral.

\begin{theorem}\label{50}
 Suppose $F\in \mathbb{C}^{1,2}(\Lambda)$ such that $F(B^{H}_{t})$ satisfies (\ref{57}), then
\begin{equation*}
   F(B^{H}_{T}) = F(B^{H}_{0}) + \int_0^T \Delta_{t}F(B^{H}_{t}) dt
                  + \int_0^T \Delta_{x}F(B^{H}_{t})  \diamond dB^{H}(t) + H \int_0^T \Delta_{xx}F(B^{H}_{t})t^{2H-1} dt.
\end{equation*}
\end{theorem}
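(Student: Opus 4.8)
The plan is to obtain this formula not by a fresh discretization argument but by composing the two results already established: the Stratonovich functional It\^o formula (Theorem \ref{32}) and the conversion identity between Wick-It\^o-Skorohod and Stratonovich integrals (Proposition \ref{54}). Since the only new content is the bookkeeping of derivatives, I expect the proof to be short, with no additional limiting argument.

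First I would specialize Theorem \ref{32} to the process $X(t)=B^{H}(t)$, which corresponds to taking $X(0)=0$, $\psi\equiv 0$ and $\varphi\equiv 1$ in (\ref{31}); the constant integrand $\varphi\equiv 1$ is trivially Stratonovich integrable and the integrability condition on $\psi,\varphi$ holds. With $X_{t}=B^{H}_{t}$, Theorem \ref{32} yields
\begin{equation*}
F(B^{H}_{T}) = F(B^{H}_{0}) + \int_0^T \Delta_{t}F(B^{H}_{t})\, dt + \int_0^T \Delta_{x}F(B^{H}_{t})\circ dB^{H}(t).
\end{equation*}
Next I would rewrite the Stratonovich integral on the right in Wick-It\^o-Skorohod form by applying Proposition \ref{54} to the functional $G:=\Delta_{x}F$. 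Since $F\in\mathbb{C}^{1,2}(\Lambda)$, the functional $\Delta_{x}F$ lies in $\mathbb{C}^{1,1}(\Lambda)$ and its vertical derivative is $\Delta_{x}(\Delta_{x}F)=\Delta_{xx}F$. Assuming $\Delta_{x}F(B^{H}_{t})$ satisfies the integrability condition (\ref{57}), Proposition \ref{54} gives
\begin{equation*}
\int_0^T \Delta_{x}F(B^{H}_{t})\diamond dB^{H}(t) = \int_0^T \Delta_{x}F(B^{H}_{t})\circ dB^{H}(t) - H\int_0^T \Delta_{xx}F(B^{H}_{t})\, t^{2H-1}\, dt,
\end{equation*}
which I would solve for the Stratonovich integral and substitute into the previous display. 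The two identities combine to give exactly the claimed formula.

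The main point requiring care is the hypothesis transfer: Proposition \ref{54} is stated for a functional in $\mathbb{C}^{1,1}(\Lambda)$ satisfying (\ref{57}), whereas I am applying it to $\Delta_{x}F$ rather than to $F$ itself. I would therefore need to verify that $F\in\mathbb{C}^{1,2}(\Lambda)$ indeed forces $\Delta_{x}F\in\mathbb{C}^{1,1}(\Lambda)$ --- in particular that the horizontal derivative $\Delta_{t}\Delta_{x}F$ exists and is $\Lambda$-continuous, and that the vertical derivative of $\Delta_{x}F$ coincides with $\Delta_{xx}F$ --- and that the integrability requirement (\ref{57}) is available for $\Delta_{x}F(B^{H}_{t})$, which is precisely the standing hypothesis of the theorem. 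Once these regularity checks are in place, the remainder is an immediate algebraic substitution.
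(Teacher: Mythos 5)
Your proposal is correct and is exactly the route the paper takes: the paper derives Theorem \ref{50} by specializing Theorem \ref{32} to $X=B^{H}$ and then applying Proposition \ref{54} to $\Delta_{x}F$, offering no further argument. Your added remark that condition (\ref{57}) really needs to hold for $\Delta_{x}F(B^{H}_{t})$ (not merely for $F(B^{H}_{t})$ as the theorem's wording suggests) is a legitimate point of care that the paper glosses over.
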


\section{Fractional BSDE}

As an application, in this section we study the fractional BSDEs with path-dependent coefficients.
We solve this class of fractional BSDEs by using a type of semilinear parabolic path-dependent PDEs.
The approach is based on a relationship between fractional BSDEs and semilinear PDEs.
For the recent developments of semilinear parabolic path-dependent PDEs we refer the readers to Ekren et al. \cite{Iekr,Iekr2,Iekr3}.

Denote $\mathcal{V}_{T} = \{ Y(t)=\phi(\gamma_{t})| \phi \in \mathbb{C}^{1,2}(\Lambda), \ \forall t\in [0,T] \}$,
and let $\mathcal{\widetilde{V}}_{T}$ be the completion of $\mathcal{V}_{T}$ under the following $\beta$-norm:
\begin{equation*}
  \| Y \|_{\beta}^{2} = \mathbb{E} \int_0^{T} e^{\beta t} |Y(t)|^{2} dt=\mathbb{E} \int_0^{T} e^{\beta t} |\phi(\gamma_{t})|^{2} dt.
\end{equation*}
Consider the following fractional BSDE with path-dependent coefficient:
\begin{equation}\label{61}
 \begin{cases}
  dY(t)= -f(B^{H}_{t},Y(t),Z(t))dt - Z(t) \diamond dB^{H}(t), \ \ 0\leq t\leq T, \\
   Y(T)= g(B^{H}_{T}).
 \end{cases}
\end{equation}
A pair of $\mathcal{F}_{t}$-adapted stochastic processes $\{(Y(t),Z(t));0\leq t\leq T\}$ is called a solution to the above equation if
\begin{equation*}
  Y(t)=g(B^{H}_{T}) + \int_t^T f(B^{H}_{s},Y(s),Z(s))ds + \int_t^T Z(s) \diamond dB^{H}(s),  \ \ 0\leq t\leq T.
\end{equation*}
We want to show that a solution in $\mathcal{\widetilde{V}}_{T}$  to the above fractional BSDE  exists uniquely.

In (\ref{61}), for the case of $f(B^{H}_{t},Y(t),Z(t))=f(B^{H}(t),Y(t),Z(t))$ and $g(B^{H}_{T})=g(B^{H}(T))$,
the existence and uniqueness theorem has been obtained by Hu and Peng \cite{Peng}.
If we consider classical Brownian motion instead of the fractional Brownian motion in (\ref{61}), it was also systemic studied by Peng and Wang \cite{Spen}.

Consider the following semilinear parabolic path-dependent PDE:
\begin{equation}\label{62}
 \begin{cases}
  \Delta_{t} u(\gamma_{t}) + \sigma (t)\Delta_{xx} u(\gamma_{t}) + f(\gamma_{t}, u(\gamma_{t}),-\Delta_{x} u(\gamma_{t}))=0,
   \ \  \gamma_{t}\in \Lambda_{t}, \ t\in[0,T), \\
  u(\gamma) = g(\gamma), \ \ \gamma\in \Lambda_{T},
 \end{cases}
\end{equation}
where $\sigma (t)=H t^{2H-1}$.
By applying Theorem \ref{50} to $u(B^{H}_{t})$, we have
\begin{equation*}
  \begin{split}
   du(B^{H}_{t}) =& \big[\Delta_{t} u(B^{H}_{t}) + \sigma (t)\Delta_{xx} u(B^{H}_{t})\big] dt + \Delta_{x} u(B^{H}_{t})  \diamond dB^{H}(t)\\
                 =& - f(B^{H}_{t}, u(B^{H}_{t}),-\Delta_{x} u(B^{H}_{t})) dt + \Delta_{x} u(B^{H}_{t})  \diamond dB^{H}(t),
   \end{split}
\end{equation*}
Thus we obtain the following theorem.
\begin{theorem}
If PDE (\ref{62}) has a solution $u$ which belongs to $\mathbb{C}^{1,2}(\Lambda)$,
then $(Y(t),Z(t)):=(u(B^{H}_{t}),-\Delta_{x} u(B^{H}_{t}))$ is a solution of the fractional BSDE (\ref{61}).
\end{theorem}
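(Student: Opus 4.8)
The plan is to read the result directly off the functional It\^{o} formula for the Wick-It\^{o}-Skorohod integral (Theorem \ref{50}), which already carries out essentially all of the work. Since $u\in\mathbb{C}^{1,2}(\Lambda)$ by hypothesis, I would first apply Theorem \ref{50} to the functional $F=u$ evaluated along the fractional Brownian path, but over the generic subinterval $[t,T]$ in place of $[0,T]$; the derivation used to obtain Theorem \ref{50} goes through verbatim with the lower limit $0$ replaced by $t$, yielding
\begin{equation*}
  u(B^{H}_{T})-u(B^{H}_{t})=\int_t^T\big[\Delta_{s}u(B^{H}_{s})+\sigma(s)\Delta_{xx}u(B^{H}_{s})\big]ds+\int_t^T\Delta_{x}u(B^{H}_{s})\diamond dB^{H}(s),
\end{equation*}
where $\sigma(s)=Hs^{2H-1}$ as in (\ref{62}).

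Next I would substitute the PDE itself. Evaluating the first equation of (\ref{62}) along $\gamma_{s}=B^{H}_{s}$ gives $\Delta_{s}u(B^{H}_{s})+\sigma(s)\Delta_{xx}u(B^{H}_{s})=-f(B^{H}_{s},u(B^{H}_{s}),-\Delta_{x}u(B^{H}_{s}))$, so the $ds$-integrand above is precisely $-f$ evaluated at $(B^{H}_{s},u(B^{H}_{s}),-\Delta_{x}u(B^{H}_{s}))$. Using the terminal condition $u(\gamma)=g(\gamma)$ to replace $u(B^{H}_{T})$ by $g(B^{H}_{T})$ and rearranging, I would obtain
\begin{equation*}
  u(B^{H}_{t})=g(B^{H}_{T})+\int_t^T f\big(B^{H}_{s},u(B^{H}_{s}),-\Delta_{x}u(B^{H}_{s})\big)ds-\int_t^T\Delta_{x}u(B^{H}_{s})\diamond dB^{H}(s).
\end{equation*}
Setting $Y(s)=u(B^{H}_{s})$ and $Z(s)=-\Delta_{x}u(B^{H}_{s})$, this is exactly the integral form defining a solution of the fractional BSDE (\ref{61}), so $(Y,Z)$ solves (\ref{61}).

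Finally I would check the admissibility of the candidate pair. Since $u$ and $\Delta_{x}u$ are $\Lambda$-continuous, both $Y(t)=u(B^{H}_{t})$ and $Z(t)=-\Delta_{x}u(B^{H}_{t})$ are $\mathcal{F}_{t}$-adapted (each depends only on the path of $B^{H}$ up to time $t$) and lie in $\mathcal{V}_{T}\subset\mathcal{\widetilde{V}}_{T}$. The one genuinely nontrivial point, as opposed to the algebra, is to confirm that $\Delta_{x}u(B^{H}_{t})$ meets the integrability requirement (\ref{57}), so that the integral $\int_t^T\Delta_{x}u(B^{H}_{s})\diamond dB^{H}(s)$ is well defined in $L^{2}(\Omega,\mathcal{F},P)$ and Theorem \ref{50} legitimately applies. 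I expect this verification to be the main obstacle, to be discharged either through an explicit integrability hypothesis on $u$ or via the $\Lambda$-continuity and boundedness of its derivatives on the relevant path space.
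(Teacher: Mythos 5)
Your proposal is correct and follows essentially the same route as the paper: apply the Wick-It\^{o}-Skorohod functional It\^{o} formula (Theorem \ref{50}) to $u(B^{H}_{t})$, substitute the path-dependent PDE (\ref{62}) to replace $\Delta_{t}u+\sigma\Delta_{xx}u$ by $-f$, and identify $(Y,Z)=(u(B^{H}_{t}),-\Delta_{x}u(B^{H}_{t}))$ with the integral form of (\ref{61}). Your additional remarks on adaptedness and on verifying the integrability condition (\ref{57}) are sensible details that the paper leaves implicit, but they do not change the argument.
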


For a pair of solutions of Eq. (\ref{61}), we derive the following relation.

\begin{proposition}
Let BSDE (\ref{61}) has a solution of the form $(Y(t)=u(B^{H}_{t}),Z(t)= v(B^{H}_{t}))$,
where  $u\in \mathbb{C}^{1,2}(\Lambda)$. Then $-\Delta_{x} u(B^{H}_{t})=v(B^{H}_{t})$.
\end{proposition}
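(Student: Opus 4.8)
The plan is to apply the Wick-It\^{o}-Skorohod functional It\^{o} formula (Theorem \ref{50}) to the first solution component $Y(t)=u(B^{H}_{t})$ and to compare the decomposition it produces with the one read off directly from the fractional BSDE (\ref{61}). Since $u\in\mathbb{C}^{1,2}(\Lambda)$ and $u(B^{H}_{t})$ satisfies (\ref{57}), Theorem \ref{50} applies up to any time $t$ and gives
\[
  u(B^{H}_{t})=u(B^{H}_{0})+\int_0^t\big[\Delta_{s}u(B^{H}_{s})+Hs^{2H-1}\Delta_{xx}u(B^{H}_{s})\big]\,ds+\int_0^t\Delta_{x}u(B^{H}_{s})\diamond dB^{H}(s).
\]
On the other hand, inserting $(Y(s),Z(s))=(u(B^{H}_{s}),v(B^{H}_{s}))$ into the integral form of (\ref{61}) and using $Y(0)=u(B^{H}_{0})$ yields the competing decomposition
\[
  u(B^{H}_{t})=u(B^{H}_{0})-\int_0^t f(B^{H}_{s},u(B^{H}_{s}),v(B^{H}_{s}))\,ds-\int_0^t v(B^{H}_{s})\diamond dB^{H}(s).
\]

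Subtracting the two identities, the terms $u(B^{H}_{t})$ and $u(B^{H}_{0})$ cancel. Writing $G(s):=\Delta_{x}u(B^{H}_{s})+v(B^{H}_{s})$ for the difference of the Wick-integrands and $\rho(s):=-\big[\Delta_{s}u(B^{H}_{s})+Hs^{2H-1}\Delta_{xx}u(B^{H}_{s})+f(B^{H}_{s},u(B^{H}_{s}),v(B^{H}_{s}))\big]$ for the difference of the $ds$-integrands, what remains is
\[
  \int_0^t G(s)\diamond dB^{H}(s)=\int_0^t\rho(s)\,ds,\qquad t\in[0,T].
\]
The desired conclusion $-\Delta_{x}u(B^{H}_{t})=v(B^{H}_{t})$ is exactly the assertion $G\equiv 0$, so the whole statement reduces to a uniqueness-of-decomposition fact: a Wick-It\^{o}-Skorohod integral that is simultaneously an absolutely continuous (finite-variation) process must have a vanishing integrand.

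I expect this uniqueness step to be the main obstacle. In the classical Brownian setting it follows at once because a continuous finite-variation local martingale is constant, but that route is closed here: $B^{H}$ has zero quadratic variation and $\int_0^t G\diamond dB^{H}$ is not a martingale. Instead I would exploit the isometry of Proposition \ref{26}, which gives for each $t$
\[
  \mathbb{E}\Big|\int_0^t G(s)\diamond dB^{H}(s)\Big|^{2}=\mathbb{E}\Big[\|G\|_{t}^{2}+\Big(\int_0^t D^{\phi}_{s}G(s)\,ds\Big)^{2}\Big]\geq\mathbb{E}\,\|G\|_{t}^{2}.
\]
Since the right-hand side of the reduced identity satisfies $\mathbb{E}|\int_0^t\rho(s)\,ds|^{2}=O(t^{2})$, while $\mathbb{E}\|G\|_{t}^{2}=\mathbb{E}\int_0^t\int_0^t\phi(u-v)G(u)G(v)\,du\,dv$ behaves like $\mathbb{E}[G(0)^{2}]\,t^{2H}$ as $t\downarrow 0$, and $2H<2$, matching the two orders of magnitude forces $\mathbb{E}[G(0)^{2}]=0$. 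Repeating the comparison on increments over $[s,s+h]$ (i.e. sliding the base point across $[0,T]$) then upgrades this to $G\equiv 0$, which is the claim. The delicate points will be verifying that the cross term $\int_0^t D^{\phi}_{s}G(s)\,ds$ is genuinely lower order (of size $O(t)$) and that the continuity of $G$ inherited from the $\Lambda$-continuity of $\Delta_{x}u$ legitimizes the small-$t$ asymptotics of $\|G\|_{t}^{2}$.
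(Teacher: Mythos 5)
Your reduction is exactly the paper's: apply Theorem \ref{50} to $u(B^{H}_{t})$, match it against the integral form of (\ref{61}), and cancel to obtain $\int_0^t\big(\Delta_{x}u(B^{H}_{s})+v(B^{H}_{s})\big)\diamond dB^{H}(s)=\int_0^t\rho(s)\,ds$ for all $t$ (the paper performs the cancellation by writing both identities backward from $t$ to $T$ and subtracting the $t=0$ instance, which is the same computation). Where you diverge is the last step. The paper disposes of the resulting uniqueness-of-decomposition question by citing Lemma 3.2 of Hu, Ocone and Song \cite{Song}, whereas you prove it directly: the isometry of Proposition \ref{26} gives $\mathbb{E}\|G\,1_{[s,s+h]}\|_{T}^{2}\le \mathbb{E}\big|\int_s^{s+h}\rho\,dr\big|^{2}=O(h^{2})$, while the left side is of order $\mathbb{E}[G(s)^{2}]\,h^{2H}$ with $2H<2$, forcing $G(s)=0$. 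This scaling argument is sound and is essentially how such lemmas are proved for $H>\frac12$; it buys a self-contained proof at the cost of having to verify that $\rho$ is bounded in $L^{2}$ (so the right side really is $O(h^{2})$) and that $(u,v)\mapsto\mathbb{E}[G(u)G(v)]$ is continuous near the diagonal (so the lower bound $\ge(\mathbb{E}[G(s)^{2}]-\varepsilon)h^{2H}$ is legitimate, using $\phi\ge 0$); both follow from $u\in\mathbb{C}^{1,2}(\Lambda)$, the Lipschitz assumption on $f$, and the integrability built into the solution notion, and are at the level of rigor the paper itself operates at. One small simplification: your worry about the cross term $\int D^{\phi}_{s}G(s)\,ds$ being lower order is unnecessary, since in the form of the isometry stated in Proposition \ref{26} that term enters as a square and can simply be discarded when passing to the lower bound $\mathbb{E}\big|\int G\diamond dB^{H}\big|^{2}\ge\mathbb{E}\|G\|^{2}$, which is exactly what your displayed inequality already does.
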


\begin{proof}
By the functional It\^{o} formula we have
\begin{equation*}
   du(B^{H}_{t}) = \big[\Delta_{t} u(B^{H}_{t}) + \sigma (t)\Delta_{xx} u(B^{H}_{t})\big] dt + \Delta_{x} u(B^{H}_{t})  \diamond dB^{H}(t).
\end{equation*}
Or we can rewrite as
\begin{equation*}
   u(B^{H}_{t}) =g(B^{H}_{T}) - \int_t^T \big[\Delta_{s} u(B^{H}_{s}) + \sigma (s)\Delta_{xx} u(B^{H}_{s})\big] ds
                 - \int_t^T \Delta_{x} u(B^{H}_{s})  \diamond dB^{H}(s).
\end{equation*}
Hence
\begin{equation*}
  \begin{split}
    & - \int_t^T \big[\Delta_{s} u(B^{H}_{s}) + \sigma (s)\Delta_{xx} u(B^{H}_{s})\big] ds - \int_t^T \Delta_{x} u(B^{H}_{s})  \diamond dB^{H}(s)\\
   =& \int_t^T f(B^{H}_{s},u(B^{H}_{s}),v(B^{H}_{s}))ds - \int_t^T v(B^{H}_{s}) \diamond dB^{H}(s).
  \end{split}
\end{equation*}
This is also true for $t=0$. Namely,
\begin{equation*}
  \begin{split}
    & - \int_0^T \big[\Delta_{s} u(B^{H}_{s}) + \sigma (s)\Delta_{xx} u(B^{H}_{s})\big] ds - \int_0^T \Delta_{x} u(B^{H}_{s})  \diamond dB^{H}(s)\\
   =& \int_0^T f(B^{H}_{s},u(B^{H}_{s}),v(B^{H}_{s}))ds - \int_0^T v(B^{H}_{s}) \diamond dB^{H}(s).
  \end{split}
\end{equation*}
Subtracting the above two equations, we deduce
\begin{equation*}
  \begin{split}
      &\int_0^t \big[\Delta_{s} u(B^{H}_{s}) + \sigma (s)\Delta_{xx} u(B^{H}_{s})+f(B^{H}_{s},u(B^{H}_{s}),v(B^{H}_{s}))\big] ds\\
        & + \int_0^t \big[\Delta_{x} u(B^{H}_{s})+v(B^{H}_{s})\big]  \diamond dB^{H}(s)=0,
  \end{split}
\end{equation*}
for all $t\in [0,T]$.
Then from Lemma 3.2 of Hu et al. \cite{Song}, we obtain
\begin{equation*}
  v(B^{H}_{t})=-\Delta_{x} u(B^{H}_{t}), \ \ \forall t\in (0,T).
\end{equation*}
This completes the proof.
\end{proof}

\begin{remark}
From the above proof, we also see that if the semilinear PDE (\ref{62}) has a unique solution, then BSDE (\ref{61}) also has a unique solution.
\end{remark}

Similar to Hu and Peng \cite{Peng} and Maticiuc and Nie \cite{Maticiuc}, we can also use the Picard iteration approach
to prove the existence and uniqueness of solutions of BSDE (\ref{61}).
Here we just present the result without the details proof.

\begin{theorem}
Let $f(\gamma, y, z)$ be uniformly  Lipschitz continuous with respect to $\gamma, y$ and $z$.
Let $g$ be $\Lambda$-continuously differentiable with bounded derivatives and of polynomial growth.
Then the fractional BSDE (\ref{61}) has a unique solution in $\mathcal{\widetilde{V}}_{T}$.
\end{theorem}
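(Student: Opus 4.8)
The plan is to construct the solution of (\ref{61}) by Picard iteration in $\mathcal{\widetilde{V}}_{T}$, following the scheme of Hu and Peng \cite{Peng}, and to force the iteration map to be a contraction by taking the weight $\beta$ in $\|\cdot\|_{\beta}$ large. First I would freeze the driver: starting from $(Y^{0},Z^{0})=(0,0)$, I define $(Y^{n+1},Z^{n+1})$ as the solution of the fractional BSDE whose generator is evaluated at the previous iterate,
\[
  Y^{n+1}(t)=g(B^{H}_{T})+\int_{t}^{T} f\big(B^{H}_{s},Y^{n}(s),Z^{n}(s)\big)\,ds+\int_{t}^{T} Z^{n+1}(s)\diamond dB^{H}(s).
\]
Since $f(B^{H}_{s},Y^{n}(s),Z^{n}(s))$ is now a known adapted process, each iterate is a \emph{linear} fractional BSDE. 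To solve it and to read off the pair $(Y^{n+1},Z^{n+1})$ I would invoke the quasi-conditional-expectation and martingale-representation machinery for fractional Brownian motion from Hu and Peng \cite{Peng}: writing $M(t)=Y^{n+1}(t)+\int_{0}^{t} f(B^{H}_{s},Y^{n},Z^{n})\,ds$, the process $M$ is a fractional martingale, so it admits a Wick-It\^{o}-Skorohod representation $M(t)=M(0)+\int_{0}^{t} Z^{n+1}\diamond dB^{H}$, which simultaneously defines $Z^{n+1}$ and yields $Y^{n+1}(t)=\widetilde{\mathbb{E}}_{t}\big[g(B^{H}_{T})+\int_{t}^{T} f(B^{H}_{s},Y^{n},Z^{n})\,ds\big]$.

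Next I would derive the contraction estimate. The essential tool is the $L^{2}$-isometry for Wick-It\^{o}-Skorohod integrals in Proposition \ref{26},
\[
  \mathbb{E}\Big| \int_{0}^{T} F(s)\diamond dB^{H}(s)\Big|^{2}=\mathbb{E}\Big[\,\|F\|_{T}^{2}+\Big(\int_{0}^{T} D^{\phi}_{s}F(s)\,ds\Big)^{2}\Big],
\]
combined with the functional It\^{o} formula (Theorem \ref{50}) applied to the squared difference of two consecutive iterates. Setting $\delta Y^{n}=Y^{n+1}-Y^{n}$ and $\delta Z^{n}=Z^{n+1}-Z^{n}$ (equipping the $Z$-component with the analogous $\beta$-weighted $L^{2}$-norm), expanding $e^{\beta t}|\delta Y^{n}(t)|^{2}$ by the It\^{o} formula and taking expectations, the $\Delta_{t}$ and $\Delta_{xx}$ drift terms produce the $\beta$-weighted quantity $\|\delta Y^{n}\|_{\beta}^{2}$ together with the diffusion contribution controlling $\delta Z^{n}$, while the only remaining term comes from the difference of generators. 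The Lipschitz continuity of $f$ in $(\gamma,y,z)$ bounds this term by a constant multiple of $\|\delta Y^{n-1}\|_{\beta}\,\|(\delta Y^{n},\delta Z^{n})\|_{\beta}+\|\delta Z^{n-1}\|_{\beta}\,\|(\delta Y^{n},\delta Z^{n})\|_{\beta}$; here the path-dependence is harmless because $f$ is assumed Lipschitz in $\gamma_{t}$ for the sup-norm $\|\cdot\|$. Choosing $\beta$ larger than a fixed multiple of the Lipschitz constant absorbs these cross terms and gives
\[
  \|\delta Y^{n}\|_{\beta}^{2}+\|\delta Z^{n}\|_{\beta}^{2}\le \tfrac{1}{2}\big(\|\delta Y^{n-1}\|_{\beta}^{2}+\|\delta Z^{n-1}\|_{\beta}^{2}\big),
\]
so $(Y^{n},Z^{n})$ is Cauchy in the $\beta$-norm and its limit $(Y,Z)\in\mathcal{\widetilde{V}}_{T}$ solves (\ref{61}). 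Uniqueness follows by applying the same estimate to the difference of two solutions, which forces it to vanish.

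The hard part will be the representation step together with the isometry-driven estimate, since the fractional setting lacks the clean It\^{o} isometry of the Brownian case: the extra term $\big(\int_{0}^{T} D^{\phi}_{s}F(s)\,ds\big)^{2}$ in Proposition \ref{26} does not decouple from $\|F\|_{T}^{2}$, so obtaining a single Gronwall-type inequality in $\beta$ requires controlling this Malliavin contribution uniformly along the iteration. I expect the principal technical obstacle to be verifying that every iterate remains in $\mathcal{\widetilde{V}}_{T}$ and satisfies the integrability condition (\ref{57}), so that the Wick-It\^{o}-Skorohod integrals and their isometry, as well as Theorem \ref{50}, are legitimately applicable at each step; this is where the bounded-derivative and polynomial-growth hypotheses on $g$ and the Lipschitz hypothesis on $f$ must be used to propagate the requisite regularity through the Picard scheme.
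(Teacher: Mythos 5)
Your proposal follows exactly the route the paper indicates: the paper in fact gives no proof of this theorem at all, stating only that it can be proved ``similar to Hu and Peng and Maticiuc and Nie \dots\ using the Picard iteration approach'' and presenting the result ``without the details proof.'' Your Picard scheme with the $\beta$-weighted contraction norm, the quasi-conditional-expectation/representation step for the linearized iterates, and the correctly identified technical obstacles (the non-decoupling Wick--It\^{o}--Skorohod isometry and the propagation of the integrability condition (\ref{57}) through the iteration) is precisely that argument, worked out in more detail than the paper itself supplies.
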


\section{Conclusions}

In this paper, we developed a functional It\^{o} calculus for fractional Brownian motion with Hurst parameter $H> \frac{1}{2}$.
In particular, the Stratonovich type and Wick-It\^{o}-Skorohod type integrals have been studied respectively.
The main result is the functional It\^{o} formulas for fractional Brownian motion.
As an application, we dealt with the fractional BSDEs with path-dependent coefficients.
A relation between this type of fractional BSDEs and path-dependent PDEs was also established.
In the coming future researches, we would devote to develop the application of the functional It\^{o} formulas that we established in this paper.
The functional It\^{o} calculus for fractional Brownian motion with Hurst parameter $H< \frac{1}{2}$ is also another goal.

\section*{Acknowledgements}

The authors would like to thank Professor Rama Cont for his helpful comments and discussions.

\end{document}